\newcommand{\comment}[1]{}
\theoremstyle{plain}
\newtheorem{theo}{Theorem}[section]
\newtheorem{cor}[theo]{Corollary}
\newtheorem{rem}[theo]{Remark}
\theoremstyle{definition}
\numberwithin{equation}{section}
\def\om{\Omega}
\def\p{\partial}
\def\reals{\mathbb{R}}
\def\realsnotz{\mathbb{R}\setminus\{0\}}
\def\Amat{{\rm A}}
\def\Cf{C_{\mathrm{\textsc{F}}}}
\def\MC{\frac{C_F^2}{\alpha}}
\def\chiom{\chi_{\om_0}}
\def\ul{\underline}
\def\ol{\overline}
\def\ft{\tilde{f}}
\def\ut{\tilde{u}}
\def\pt{\tilde{p}}
\def\qt{\tilde{q}}
\def\chat{\hat{c}}
\def\elems{\mathcal{T}}
\def\elem{T}
\newcommand{\set}[2]{\{#1\,\mid\,#2\}}
\DeclareMathOperator{\A}{A}
\def\As{\A^{*}}
\def\na{\nabla}
\DeclareMathOperator{\opdiv}{div}
\def\div{\opdiv}
\def\L{L}
\def\Lcrd{\L_{\mathrm{\textsc{1}}}}
\def\Lcrdd{\L_{\mathrm{\textsc{2}}}}
\def\M{\mathcal{M}}
\def\Mcrd{\M_{\mathrm{\textsc{1}}}}
\def\Mcrdd{\M_{\mathrm{\textsc{2}}}}
\def\Mcrdmaj{\M^+_{\mathrm{\textsc{3}}}}
\def\Mcrdmin{\M^-_{\mathrm{\textsc{3}}}}
\DeclareMathOperator{\cont}{\sf C}
\DeclareMathOperator{\lebesgue}{\sf L}
\DeclareMathOperator{\hilbert}{\sf H}
\DeclareMathOperator{\divergence}{\sf D}
\def\ciga{\cont^\infty_0}
\def\lo{\lebesgue^1}			
\def\lt{\lebesgue^2}			
\def\li{\lebesgue^\infty}	
\def\ho{\hilbert^1}			
\def\hoga{\hilbert^1_0}
\def\d{\divergence}			
\newcommand{\norm}[1]{|#1|}	
\newcommand{\normm}[1]{\|#1\|}
\newcommand{\normmm}[1]{|\!|\!|#1|\!|\!|}
\newcommand{\norms}[1]{|\![#1]\!|}
\newcommand{\normli}[1]{\norm{#1}_{\infty}}
\newcommand{\normlt}[1]{\norm{#1}}
\newcommand{\normltc}[1]{\norm{#1}_{c}}
\newcommand{\normltci}[1]{\norm{#1}_{c^{-1}}}
\newcommand{\normltA}[1]{\norm{#1}_{\Amat}}
\newcommand{\normltAi}[1]{\norm{#1}_{\Amat^{-1}}}
\newcommand{\normltcb}[1]{\norm{#1}_{c-\div b}}
\newcommand{\normltcbi}[1]{\norm{#1}_{(c-\div b)^{-1}}}
\newcommand{\normltb}[1]{\norm{#1}_{-\div b}}
\newcommand{\normltbi}[1]{\norm{#1}_{(-\div b)^{-1}}}
\newcommand{\normho}[1]{\norm{#1}_{\ho}}	
\newcommand{\normd}[1]{\norm{#1}_{\d}}	
\newcommand{\scp}[2]{\langle#1,#2\rangle}
\newcommand{\scplt}[2]{\scp{#1}{#2}}
\title[\sc Functional A Posteriori Error Equalities]
{\Large\sf Functional A Posteriori Error Control for Conforming Mixed Approximations of the Reaction-Convection-Diffusion Problem}
\author{Immanuel Anjam}
\address{Fakult\"at f\"ur Mathematik,
Universit\"at Duisburg-Essen, Campus Essen, Germany}
\email[Immanuel Anjam]{immanuel.anjam@uni-due.de}
\keywords{functional a posteriori error estimates, error equalities, 
mixed formulations, combined norm, reaction-convection-diffusion}
\subjclass{65N15}
\date{\today}
\thanks{The author would like to thank Dirk Pauly and Sergey Repin for discussions and suggestions related to the results exposed in this paper. This research was supported by the Emil Aaltonen Foundation.}
\begin{document}

\begin{abstract}
In this paper we show how to obtain the exact value of the global error of a conforming mixed approximation of the reaction-convection-diffusion problem. We operate in the framework of functional type a posteriori error control. The error is measured in a combined norm which takes into account both the primal and dual variables. Our main results state that the exact global error value of a conforming mixed approximation is given by a functional which includes only known quantities. The presented error equalities hold with certain restrictions on the reaction coefficient and the convection vector, namely, under the conditions when the solution operators of the corresponding problems are isometries. For the case where these restrictions are not satisfied we derive a two-sided error estimate.
\end{abstract}

\maketitle
\tableofcontents

\newpage


\section{Introduction}

The results presented in this paper are based on the framework of functional type a posteriori error control. These type estimates and equalities are valid for any conforming approximation and contain only global constants. For a detailed exposition of the theory see the books \cite{repinbookone} by Repin and \cite{NeittaanmakiRepin2004} by Repin and Neittaanm\"aki. For a more computational point of view see \cite{MaliRepinNeittaanmaki2014} by Mali, Repin, and Neittaanm\"aki.

In this paper we will consider only conforming approximations, and we will measure the error of our approximations in a combined norm, which includes the error of both the primal and the dual variable. This is especially useful for mixed methods where one calculates an approximation for both the primal and dual variables, see e.g. the book \cite{brezzifortinbookone} by Brezzi and Fortin. We call this approximation pair a mixed approximation. We note here that we consider more regular mixed approximations than in \cite{brezzifortinbookone}. However, for less regular approximations the results of this paper can still be used by the help of post-processing techniques (this is briefly commented upon in Section 5).

Functional a posteriori error estimates for mixed approximations in combined norms were first exposed in the paper \cite{repinsautersmolianskiaposttwosideell}, where the authors studied real valued problems of the type
\begin{equation*}
	\As\A u=f,
\end{equation*}
where $\A$ is a differential operator, and $\As$ its adjoint. Here the primal variable is $u$, and the dual variable is $p:=\A u$. The simplest partial differential equation contained in this class of problems is the Poisson equation with $\A=\na$ and $\As=-\div$. In \cite{repinsautersmolianskiaposttwosideell} the authors present two-sided estimates bounding the combined error by the same quantity from below and from above aside from multiplicative constants.

Complex (and real) valued problems with lower order terms, i.e., of the following two types
\begin{equation*}
	\As\A u + u=f \qquad \textrm{and}
	\qquad
	\As\A u + i\nu u=f
\end{equation*}
were studied in \cite{anjampaulyeq}. Here $i$ is the imaginary unit and $\nu\in\realsnotz$. For example, the  reaction-diffusion equation belongs to the first class of problems, and the eddy-current problem to the second class of problems. In \cite{anjampaulyeq} the authors derive an error equality for the first class of problems, meaning that the exact global error value of a mixed approximation can be calculated by having the problem data and the approximation at hand. In the real valued case the error equality is a special case of the very general result \cite[(7.2.14)]{NeittaanmakiRepin2004}, and was also found in \cite[Remark 6.12]{caiestimators} in the case of the real valued reaction-diffusion problem. For the second type of problems the authors derive in \cite{anjampaulyeq} a two-sided estimate bounding the combined error by the same quantity from below and from above aside from multiplicative constants.

Most work on functional type a posteriori error control for the reaction-convection-diffusion problem has concentrated on error estimation for approximations of the primal variable. The error estimates resulting from this research can be found in \cite{eigelsamrowskiapost,repindivapost,nicaiserepinapost,kuzminhannukainenapost}, and some of these results are also exposed in the book \cite{repinbookone}. In \cite{nicaiserepinapost} the authors also derive two-sided estimates for mixed approximations in combined norms. The error measure is different to the one used in this paper, and is commented upon in Remark \ref{rem:CRD1}.

This paper concentrates in deriving error equalities and two-sided error estimates for mixed approximations of the reaction-convection-diffusion problem. We work in the complex valued setting, but the results hold also in the real case. The paper is organized as follows. In Section \ref{sec:MODEL} we state our model problem in two equivalent forms. We show that the solution operators of these problems are isometries, and comment on the fact that error control immediately follows if the primal approximation contains additional regularity. Section \ref{sec:APOST} is dedicated to a posteriori error equalities for the reaction-convection-diffusion equations. The main results of this section are stated in Theorems \ref{thm:CRD1} and \ref{thm:CRD2}. In Section \ref{sec:APOSTEST} we derive a two-sided error estimate which is applicable also in the case where both the reaction coefficient and convection vector divergence are zero simultaneously in some part of the domain. The main result of this section is stated in Theorem \ref{thm:CRD3}. In Section 5 we briefly comment on some aspects of the derived results.


\section{Notation and the Model Problems} \label{sec:MODEL}

In this paper all spaces are defined over the complex field, and if not otherwise indicated, functions are complex valued. All results hold unchanged in the real valued case as well.

Let $\om\subset\reals^d$, $d\geq1$, be an arbitrary (bounded or unbounded) domain. We denote by $\scplt{\,\cdot\,}{\,\cdot\,}_{\omega}$ and $\normlt{\,\cdot\,}_{\omega}$ the inner product and norm for scalar or vector valued functions in $\lt(\omega)$ for $\omega\subset\Omega$. We introduce the following subindex notation: we define $\scp{\,\cdot\,}{\,\cdot\,}_{\omega,\gamma}:=\scplt{\gamma\,\cdot\,}{\,\cdot\,}_{\omega}$, which induces $\norm{\,\cdot\,}_{\omega,\gamma}$, where $\gamma$ belongs to the space of essentially bounded functions $\li(\omega)$. If $\gamma$ is self-adjoint and uniformly positive definite, they become an inner product and a norm in $\lt(\omega)$, respectively. We denote by $\norm{\cdot}_{\omega,\infty}$ the norm in $\li(\omega)$. If $\omega=\om$, we will not indicate the dependence on domain in our notations, i.e., $\normlt{\,\cdot\,} = \normlt{\,\cdot\,}_{\om}$ for functions in $\lt = \lt(\om)$.

We define the usual Sobolev spaces
\begin{equation*}
	\ho:=\set{\varphi\in\lt}{\na\varphi\in\lt},
	\qquad
	\d:=\set{\psi\in\lt}{\div\psi\in\lt} .
\end{equation*}
These are Hilbert spaces equipped with the respective graph norms $\normho{\,\cdot\,}$, $\normd{\,\cdot\,}$. The space of functions belonging to $\ho$ and vanishing on the boundary is defined as the closure of smooth and compactly supported test functions
\begin{equation*}
	\hoga:=\ol{\ciga}^{\ho} .
\end{equation*}
We have
\begin{equation} \label{eq:partint}
	\forall\,\varphi\in\hoga\quad\forall\,\psi\in\d\qquad
	\scplt{\na\varphi}{\psi}=-\scplt{\varphi}{\div\psi} .
\end{equation}

The reaction-convection-diffusion problem in the mixed form consists of finding a scalar potential $u\in\ho$ and a diffusive flux $p\in\d$, such that
\begin{equation} \label{eq:pde1}
\begin{array}{r@{$\;$}c@{$\;$}l l}
	p & = & \Amat \na u & \quad \textrm{in } \om , \\
	-\div p + b \cdot \na u + c\,u & = & f & \quad \textrm{in } \om , \\
	u & = & 0 & \quad \textrm{on } \p\om ,
\end{array}
\end{equation}
where the source $f$ belongs to $\lt$. The diffusion matrix $\Amat\in\li$ is self-adjoint and uniformly positive definite. In particular
\begin{equation} \label{eq:Amat}
	\forall\,\varphi\in\lt\qquad
	\alpha \norm{\varphi}^2 \leq \scplt{\Amat\varphi}{\varphi} \leq \beta \norm{\varphi}^2 ,
\end{equation}
where $0 < \alpha \leq \beta$. The convection vector $b$ is a real valued function from $\li$ such that $\div b \in \li$. The real scalar valued reaction coefficient $c$ belongs to $\li$. The variables $u$ and $p$ are often called primal and dual variables, respectively.

By multiplying \eqref{eq:pde1} with a test function, and using \eqref{eq:partint} we obtain the weak formulation of this problem: find $u\in\hoga$ such that
\begin{equation} \label{eq:gen1}
	\forall\,w\in\hoga\qquad
	\scplt{\Amat \na u}{\na w} + \scplt{b \cdot \na u + c\,u}{w} =\scplt{f}{w}.
\end{equation}
For any $v,w\in\hoga$ we have the identity (see Appendix \ref{app:identity1})
\begin{equation} \label{eq:identity1}
	\scplt{b\cdot \na v}{w} = - \scplt{b\,v}{\na w} - \scplt{(\div b)v}{w} .
\end{equation}
By setting $v = w$ we see that
\begin{equation} \label{eq:convmanip}
	\normltb{w}^2 = \scplt{(-\div b) w}{w} = 2 \Re \scplt{b \cdot \na w}{w} .
\end{equation}
By setting $w=u$ and applying \eqref{eq:convmanip} to the real part of the form generated by the left hand side of \eqref{eq:gen1}, we see that
\begin{align}
	\Re\big( \scplt{\Amat \na u}{\na u} + \scplt{b \cdot \na u + c\,u}{u} \big)
	& = \scplt{\Amat \na u}{\na u} + \scplt{\left(c-\frac{1}{2}\div b\right)u}{u} \nonumber \\
	& = \scplt{\na u}{\na u}_{\Amat} + \scplt{u}{u}_{c-\frac{1}{2}\div b} \label{eq:coer} \\
	& = \normlt{\na u}^2_{\Amat} + \normlt{u}^2_{c-\frac{1}{2}\div b} . \nonumber
\end{align}
The form is then coercive provided that
\begin{equation*}
	c - \frac{1}{2} \div b \ge \lambda > 0
\end{equation*}
holds, and then \eqref{eq:gen1} has a unique solution $u\in\hoga$ by the Lax-Milgram Theorem. If the domain is bounded (by bounded we mean bounded at least in one direction, i.e., it lies between two parallel hyperplanes) this condition can be weakened by using the Friedrichs inequality
\begin{equation} \label{eq:Cf}
	\forall w \in \hoga \qquad
	\normlt{w} \leq \Cf \normlt{\na w},
\end{equation}
where $\Cf$ is the Friedrichs constant. We note that generally the exact value of the Friedrichs constant is unknown, but it is easy to estimate it from above. Using \eqref{eq:Amat} and \eqref{eq:Cf} we can write \eqref{eq:coer} as
\begin{align}
	\Re \big( \scplt{\Amat \na u}{\na u} + \scplt{b \cdot \na u + c\,u}{u} \big)
	& = \frac{1}{2} \normlt{\na u}^2_{\Amat} + \frac{1}{2} \normlt{\na u}^2_{\Amat} + \normlt{u}^2_{c-\frac{1}{2}\div b} \nonumber \\
	& \ge \frac{1}{2} \normlt{\na u}^2_{\Amat} + \frac{\alpha}{2\Cf^2} \normlt{u}^2 + \normlt{u}^2_{c-\frac{1}{2}\div b} \label{eq:coer2} \\
	& = \frac{1}{2} \normlt{\na u}^2_{\Amat} + \normlt{u}^2_{c-\frac{1}{2}\div b+\frac{\alpha}{2\Cf^2}} . \nonumber
\end{align}
The form is then coercive provided that
\begin{equation*}
	c-\frac{1}{2}\div b+\frac{\alpha}{2\Cf^2} \ge \lambda > 0
\end{equation*}
holds, and then \eqref{eq:gen1} has a unique solution $u\in\hoga$ by the Lax-Milgram Theorem.

By applying \eqref{eq:identity1} to \eqref{eq:gen1} we obtain an alternative formulation for the reaction-convection-diffusion problem:
\begin{equation*}
	\forall\,w\in\hoga\qquad
	\scplt{\Amat \na u - b\,u}{\na w} + \scplt{(c-\div b)u}{w} =\scplt{f}{w} ,
\end{equation*}
from which we can deduce the Euler equations in the mixed form:
\begin{equation} \label{eq:pde2}
\begin{array}{r@{$\;$}c@{$\;$}l l}
	q & = & \Amat \na u - b\,u & \quad \textrm{in }\om , \\
	-\div q + (c-\div b)u & = & f & \quad \textrm{in }\om , \\
	u & = & 0 & \quad \textrm{on }\p\om .
\end{array}
\end{equation}
The dual variable $q$ is the so-called total flux, the combination of the diffusive flux $p = \Amat \na u$ and the convective flux $bu$.

\begin{rem} \label{rem:isometry}
With additional assumptions on the material coefficients and 
equipped with suitable norms the solution operators of \eqref{eq:pde1} and \eqref{eq:pde2} are isometries.
\begin{itemize}
\item[\bf(i)] Assume $c \ge c_0 > 0$ and $c-\div b \geq 0$. Then the solution operator
\begin{equation*}
	\Lcrd :\lt\to\hoga\times\d;f\mapsto(u,p)
\end{equation*}
of \eqref{eq:pde1} is an isometry, i.e., $\norm{\Lcrd}=1$: from \eqref{eq:pde1} we see that
\begin{align*}
	\normltci{f}^2
	& = \normltci{c\,u + b\cdot\na u - \div p}^2 \\
	& = \normltci{c\,u}^2 + \normltci{b\cdot\na u - \div p}^2 + 2\Re\scp{c\,u}{b\cdot\na u - \div p}_{c^{-1}} \\
	& = \normltc{u}^2 + \normltci{b\cdot\na u - \div p}^2 + 2\Re\scplt{u}{b\cdot\na u - \div p} .
\end{align*}
Together with $\scplt{u}{-\div p} = \scplt{\na u}{p} = \scplt{\na u}{\Amat \na u} = \normltA{\na u}^2 = \normltAi{p}^2 $ and \eqref{eq:convmanip} this becomes
\begin{align*}
	\normltci{f}^2
	& = \normltc{u}^2 + \normltci{b\cdot\na u - \div p}^2 + \normltA{\na u}^2 + \normltAi{p}^2 + \normltb{u}^2 \\
	& = \normltcb{u}^2 + \normltA{\na u}^2 + \normltAi{p}^2 + \normltci{b\cdot\na u - \div p}^2 \\
	& =: \normm{(u,p)}^2 .
\end{align*}
\item[\bf(ii)] Assume $c \ge 0$ and $c-\div b \ge \lambda > 0$. Then the solution operator
\begin{equation*}
	\Lcrdd :\lt\to\hoga\times\d;f\mapsto(u,q)
\end{equation*}
of \eqref{eq:pde2} is an isometry, i.e., $\norm{\Lcrdd}=1$: from \eqref{eq:pde2} we see that
\begin{align*}
	\normltcbi{f}^2
	& = \normltcbi{(c-\div b)u - \div q}^2 \\
	& = \normltcbi{(c-\div b)u}^2 + \normltcbi{\div q}^2 + 2\Re\scp{(c-\div b)u}{-\div q}_{(c-\div b)^{-1}} \\
	& = \normltcb{u}^2 + \normltcbi{\div q}^2 + 2\Re\scplt{u}{-\div q} .
\end{align*}
Together with $\scplt{u}{-\div q} = \scplt{\na u}{q} = \scplt{\na u}{\Amat\na u - b\,u} = \normltA{\na u}^2 - \scplt{\na u}{b\,u}$ and \eqref{eq:convmanip} this becomes
\begin{align*}
	\normltcbi{f}^2
	& = \normltcb{u}^2 + \normltcbi{\div q}^2 + 2\normltA{\na u}^2 - \normltb{u}^2 \\
	& = \normltc{u}^2 + \normltA{\na u}^2 + \normltAi{q+b\,u}^2 + \normltcbi{\div q}^2 \\
	& =: \normmm{(u,q)}^2 ,
\end{align*}
where we have also used $\normltA{\na u} = \normltA{\Amat^{-1}(q +b\,u)} = \normltAi{q +b\,u}$.
\end{itemize}
\end{rem}

It is easy to see that the norms $\normm{\cdot}$ and $\normmm{\cdot}$ are equivalent to weighed $\ho \times \d$ -norms (see Appendix \ref{app:normequiv}), giving some justification on their use as error measures. In fact, Remark \ref{rem:isometry} immediately furnishes error equalities for sufficiently regular approximations: Let the primal approximation $\ut\in\hoga$ be such that $\pt=\Amat\na\ut \in \d$. Then, the pair $(\ut,\pt)$ can be seen as the exact solution of the problem
\begin{equation*}
\begin{array}{r@{$\;$}c@{$\;$}l l}
	\pt & = & \Amat \na \ut & \quad \textrm{in } \om , \\
	-\div\pt + b \cdot \na \ut + c\,\ut & =: & \ft & \quad \textrm{in } \om , \\
	\ut & = & 0 & \quad \textrm{on } \p\om ,
\end{array}
\end{equation*}
i.e., we have $\Lcrd(\ft) = (\ut,\pt)$.
Then by Remark \ref{rem:isometry}(i) we directly have
\begin{equation*}
	\normm{(u,p)-(\ut,\pt)}^2 = \normm{\Lcrd(f-\ft)}^2 = \normltci{f-\ft}^2
	= \normltci{f - c\,\ut - b \cdot \na \ut + \div\pt}^2
\end{equation*}
and this can obviously be normalized to obtain
\begin{equation*}
	\frac{\normm{(u,p)-(\ut,\pt)}^2}{\normm{(u,p)}^2}
	= \frac{\normltci{f - c\,\ut - b \cdot \na \ut + \div\pt}^2}{\normltci{f}^2} .
\end{equation*}
A similar error equality for the formulation \eqref{eq:pde2} follows directly from Remark \ref{rem:isometry}(ii). The assumption of the high regularity of $\ut$ is not very practical, and we will remove it in subsequent sections. However, we emphasize that for linear problems error control follows directly from estimates of the norm of the solution operator, provided that the primal approximation is sufficiently regular.


\section{Error Equalities} \label{sec:APOST}

In this section we derive error equalities for the two formulations of the reaction-convection-diffusion problem discussed in the previous section.

In the following, we will understand the pairs $(\ut,\pt),(\ut,\qt)\in\hoga\times\d$ without further requirements as (mixed) approximations of the exact solutions $(u,p),(u,q)\in\hoga\times\d$, respectively. In other words, the results are applicable to any \emph{conforming} approximation, i.e., approximations which belong to the appropriate Hilbert spaces and satisfy the boundary conditions exactly. To emphasize further, we do not attract any specific properties of a numerical method.

In the previous section we listed the requirements for the material coefficients $\Amat,b,$ and $c$ which guarantee existence and uniqueness of a solution. For clarity we will not repeat these requirements in this section, but simply list those requirements which are needed for the error equalities to hold. These requirements will be same as the ones in Remark \ref{rem:isometry}, i.e., the conditions under which the solution operators are isometries.


\subsection{Error Equality for the Problem \eqref{eq:pde1}}

We present the first main result of the paper.

\begin{theo} 
\label{thm:CRD1}
Let $(u,p)\in\hoga\times\d$ be the exact solution of \eqref{eq:pde1}, and assume $c \ge c_0 > 0$ and $c-\div b \geq 0$. Let $(\ut,\pt)\in\hoga\times\d$ be arbitrary. Then
\begin{equation} \label{eq:CRD1_main1}
	\normm{(u,p)-(\ut,\pt)}^2 = \Mcrd(\ut,\pt)
\end{equation}
and the normalized counterpart
\begin{equation} \label{eq:CRD1_main2}
	\frac{\normm{(u,p)-(\ut,\pt)}^2}{\normm{(u,p)}^2}
	= \frac{\Mcrd(\ut,\pt)}{\normltci{f}^2}
\end{equation}
hold, where
\begin{align*}
	\normm{(u,p)-(\ut,\pt)}^2 = & \, \normltcb{u-\ut}^2 + \normltA{\na(u-\ut)}^2 +
	\normltAi{p-\pt}^2 + \normltci{b\cdot\na(u-\ut)-\div(p-\pt)}^2 , \\
	\Mcrd(\ut,\pt) := &  \, \normltci{f - c\,\ut - b\cdot\na\ut + \div\pt}^2
	+ \normltAi{\pt - \Amat\na\ut}^2 .
\end{align*}
\end{theo}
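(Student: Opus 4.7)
The plan is to compute $\Mcrd(\ut,\pt)$ directly in terms of the error pair $e_u:=u-\ut\in\hoga$, $e_p:=p-\pt\in\d$, and to show by straightforward algebra that the two cross terms produced by the expansion cancel, leaving exactly the four summands of the combined error norm. First I would use the strong form \eqref{eq:pde1} satisfied by $(u,p)$ to rewrite the two residuals appearing inside $\Mcrd$ as pure error expressions:
\begin{align*}
f - c\,\ut - b\cdot\na\ut + \div\pt &= c\,e_u + b\cdot\na e_u - \div e_p,\\
\pt - \Amat\na\ut &= \Amat\na e_u - e_p.
\end{align*}
This reduces the theorem to an identity purely about $(e_u,e_p)$, with no reference to $f$.

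Next I would expand $\normltci{c\,e_u + b\cdot\na e_u - \div e_p}^2$ in the $c^{-1}$-weighted inner product. The ``diagonal'' terms yield $\normltc{e_u}^2$ and $\normltci{b\cdot\na e_u - \div e_p}^2$, while the mixed term simplifies to $2\Re\scplt{e_u}{b\cdot\na e_u - \div e_p}$ since the $c^{-1}$-weight cancels against one $c$. To this mixed term I apply identity \eqref{eq:convmanip} on the convective part (producing $\normltb{e_u}^2$) and the integration-by-parts formula \eqref{eq:partint} on the divergence part (producing $2\Re\scplt{\na e_u}{e_p}$). Using $\normltc{e_u}^2+\normltb{e_u}^2=\normltcb{e_u}^2$, the result is
\begin{equation*}
\normltci{c\,e_u + b\cdot\na e_u - \div e_p}^2 = \normltcb{e_u}^2 + \normltci{b\cdot\na e_u - \div e_p}^2 + 2\Re\scplt{\na e_u}{e_p}.
\end{equation*}

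In parallel, I would expand $\normltAi{\Amat\na e_u - e_p}^2$ in the $\Amat^{-1}$-weighted inner product. Using $\scp{\Amat\na e_u}{\Amat\na e_u}_{\Amat^{-1}}=\normltA{\na e_u}^2$ and $\scp{\Amat\na e_u}{e_p}_{\Amat^{-1}}=\scplt{\na e_u}{e_p}$, this produces
\begin{equation*}
\normltAi{\Amat\na e_u - e_p}^2 = \normltA{\na e_u}^2 - 2\Re\scplt{\na e_u}{e_p} + \normltAi{e_p}^2.
\end{equation*}
Summing the two expansions, the cross terms $\pm\,2\Re\scplt{\na e_u}{e_p}$ cancel exactly, and the remaining four pieces are precisely the four summands defining $\normm{(u,p)-(\ut,\pt)}^2$; this proves \eqref{eq:CRD1_main1}. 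The normalized form \eqref{eq:CRD1_main2} then follows by dividing by $\normm{(u,p)}^2$ and substituting $\normm{(u,p)}^2=\normltci{f}^2$, which is exactly the isometry identity already established in Remark \ref{rem:isometry}(i).

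The only nontrivial point is the cross-term cancellation: every other manipulation is a routine scalar-product expansion combined with the already-available identities \eqref{eq:partint} and \eqref{eq:convmanip}. This cancellation is the structural reason an error \emph{equality} (rather than merely a two-sided estimate with non-unit constants) is available here, and it hinges crucially on the compatibility between the $c^{-1}$-weight used in the $c\,e_u$ term and the $\Amat^{-1}$-weight used in the $\Amat\na e_u$ term, both inherited from the isometry computation of Remark \ref{rem:isometry}(i).
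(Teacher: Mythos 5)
Your proposal is correct and follows essentially the same route as the paper's proof: rewrite both residuals via the strong form \eqref{eq:pde1}, expand the two weighted squares so the $c^{-1}$- and $\Amat^{-1}$-weights cancel, eliminate the cross terms with \eqref{eq:partint}, absorb the convective term via \eqref{eq:convmanip} into $\normltcb{u-\ut}^2$, and obtain \eqref{eq:CRD1_main2} from the isometry of Remark \ref{rem:isometry}(i). The only differences are cosmetic, namely your sign convention $e_p = p-\pt$ versus the paper's $\pt-p$ and the fact that you apply \eqref{eq:partint} and \eqref{eq:convmanip} inside the first expansion rather than collecting all cross terms at the end.
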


\begin{proof}
We begin by using \eqref{eq:pde1} and $p=\Amat \na u$:
\begin{align*}
\Mcrd(\ut,\pt)
&=\normltci{f - c\,\ut - b\cdot\na\ut + \div\pt}^2 + \normltAi{\pt-\Amat\na\ut}^2\\
&=\normltci{c(u-\ut) + b\cdot\na(u-\ut) + \div(\pt-p)}^2+\normltAi{\pt-p + \Amat\na(u-\ut)}^2\\
&=\normltci{c(u-\ut)}^2+\normltci{b\cdot\na(u-\ut) + \div(\pt-p)}^2
+2\Re\scp{c(u-\ut)}{b\cdot\na(u-\ut) + \div(\pt-p)}_{c^{-1}}\\
&\qquad+\normltAi{\pt-p}^2+\normltAi{\Amat\na(u-\ut)}^2+2\Re\scp{\pt-p}{\Amat\na(u-\ut)}_{\Amat^{-1}}\\
&=\normltc{u-\ut}^2 + \normltA{\na(u-\ut)}^2 + \normltAi{\pt-p}^2 + \normltci{b\cdot\na(u-\ut) + \div(\pt-p)}^2\\
&\qquad+2\Re\scplt{\pt-p}{\na(u-\ut)}
+2\Re\scplt{u-\ut}{\div(\pt-p)}
+2\Re\scplt{u-\ut}{b\cdot\na(u-\ut)} .
\end{align*}
The first two terms in the last line cancel each other due to \eqref{eq:partint}, since $u-\ut \in \hoga$ and $\pt-p \in \d$. By using \eqref{eq:convmanip} we see that the third term in the last line can be written as
\begin{equation*}
	2\Re\scplt{u-\ut}{b\cdot\na(u-\ut)} = \normltb{u-\ut}^2 .
\end{equation*}
We arrive at \eqref{eq:CRD1_main1} by combining this term to the other weighed $\lt$ norm of the difference $u-\ut$. The equality \eqref{eq:CRD1_main2} follows by the isometry property in Remark \ref{rem:isometry}(i).
\end{proof}

\begin{cor}
\label{cor:diffb}
If $b = 0$, the problem \eqref{eq:pde1} becomes the reaction-diffusion equation. For the error equality to hold we need to assume $c \ge c_0 > 0$. The equality of Theorem \ref{thm:CRD1} then becomes
\begin{equation*}
	\normltc{u-\ut}^2 + \normltA{\na(u-\ut)}^2 +
	\normltAi{p-\pt}^2 + \normltci{\div(p-\pt)}^2 \\
	= \normltci{f - c\,\ut + \div\pt}^2 + \normltAi{\pt - \Amat\na\ut}^2 ,
\end{equation*}
which is the result from \cite{anjampaulyeq,NeittaanmakiRepin2004,caiestimators}. Naturally, this error equality has a normalized counterpart.
\end{cor}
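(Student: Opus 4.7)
The plan is to obtain the corollary as a direct specialization of Theorem \ref{thm:CRD1} to the case $b=0$, so essentially no new work is required beyond verifying the hypotheses and simplifying the resulting expressions. First I would check that the assumptions of Theorem \ref{thm:CRD1} are satisfied: the assumption $c \ge c_0 > 0$ is given, and since $b=0$ trivially gives $\div b = 0$, the second assumption $c - \div b \ge 0$ reduces to $c \ge 0$, which follows from $c \ge c_0 > 0$. Hence Theorem \ref{thm:CRD1} applies, and the problem \eqref{eq:pde1} collapses to the reaction-diffusion equation $-\div p + c\,u = f$ with $p = \Amat \na u$.

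Next I would substitute $b = 0$ into each term of the equality \eqref{eq:CRD1_main1}. On the left-hand side, the weight $c - \div b$ simplifies to $c$, so $\normltcb{u-\ut}^2$ becomes $\normltc{u-\ut}^2$; the diffusion term $\normltA{\na(u-\ut)}^2$ and the flux term $\normltAi{p-\pt}^2$ are unaffected; and the combined residual term $\normltci{b\cdot\na(u-\ut)-\div(p-\pt)}^2$ reduces to $\normltci{\div(p-\pt)}^2$. On the right-hand side, the functional $\Mcrd(\ut,\pt)$ likewise simplifies: the convective contribution $b\cdot\na\ut$ vanishes from the first term, yielding $\normltci{f - c\,\ut + \div\pt}^2$, while the second term $\normltAi{\pt - \Amat\na\ut}^2$ is unchanged. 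Collecting these substitutions gives precisely the stated identity.

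There is no real obstacle here; the only thing to mention is that the normalized counterpart follows analogously from \eqref{eq:CRD1_main2} by observing that under $b=0$ the isometry condition in Remark \ref{rem:isometry}(i) reduces to the isometry for the reaction-diffusion solution operator, so dividing by $\normltci{f}^2 = \normm{(u,p)}^2$ yields the normalized version. This also recovers the previously known equalities from \cite{anjampaulyeq,NeittaanmakiRepin2004,caiestimators}, as claimed.
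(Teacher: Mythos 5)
Your proposal is correct and matches the paper's (implicit) argument exactly: the corollary is simply Theorem \ref{thm:CRD1} specialized to $b=0$, with the hypothesis check $c-\div b = c \ge c_0 > 0$ and the term-by-term simplification you carry out, and the normalized version following from \eqref{eq:CRD1_main2} just as you say. No gaps.
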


\begin{rem} \label{rem:CRD1}
We note the following:
\begin{itemize}
\item[\bf(i)] If $c-\div b=0$ boundedness of the domain is needed for proper error control (see Appendix \ref{app:normequiv}).
\item[\bf(ii)] One of the terms in the error measure can be written as
\begin{equation*}
	\normltci{b\cdot\na(u-\ut)-\div(p-\pt)}^2 = \normltci{f - c\,u - b\cdot\na\ut+\div\pt}^2 .
\end{equation*}
\item[\bf(iii)] By applying the first norm equivalence in Appendix \ref{app:normequiv} to the equality of Theorem \ref{thm:CRD1} one obtains a two-sided estimate where the error is measured in a weighed $\ho\times\d$ -norm. This two-sided estimate is similar to the one derived in\cite{nicaiserepinapost}. However, these bounds can be quite coarse, which is apparent from the constants appearing in the norm equivalence.
\end{itemize}
\end{rem}

We emphasize here that the error equality is possible because of the presence of the reactive term $c \, u$. Indeed, $c$ cannot be zero. Unfortunately this means that the result of Theorem \ref{thm:CRD1} cannot be used for the convection-diffusion problem. However, the result of the next section allows also for this case.


\subsection{Error Equality for the Problem \eqref{eq:pde2}}

We present the second main result of the paper.

\begin{theo} 
\label{thm:CRD2}
Let $(u,q)\in\hoga\times\d$ be the exact solution of \eqref{eq:pde2}, and assume $c \ge 0$ and $c-\div b \ge \lambda > 0$. Let $(\ut,\qt)\in\hoga\times\d$ be arbitrary. Then
\begin{equation} \label{eq:CRD2_main1}
	\normmm{(u,q)-(\ut,\qt)}^2 = \Mcrdd(\ut,\qt)
\end{equation}
and the normalized counterpart
\begin{equation} \label{eq:CRD2_main2}
	\frac{\normmm{(u,q)-(\ut,\qt)}^2}{\normmm{(u,q)}^2}
	= \frac{\Mcrdd(\ut,\qt)}{\normltcbi{f}^2}
\end{equation}
hold, where
\begin{align*}
	\normmm{(u,q)-(\ut,\qt)}^2 = & \, \normltc{u-\ut}^2 + \normltA{\na(u-\ut)}^2 +
	\normltAi{q-\qt + b(u-\ut)}^2 + \normltcbi{\div(q-\qt)}^2 , \\
	\Mcrdd(\ut,\qt) := & \, \normltcbi{f - (c-\div b)\ut + \div\qt}^2
	+\normltAi{\qt - \Amat\na\ut + b\,\ut}^2.
\end{align*}
\end{theo}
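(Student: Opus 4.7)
The plan is to mirror the proof of Theorem \ref{thm:CRD1} closely, working backward from the majorant $\Mcrdd(\ut,\qt)$ and using the strong form \eqref{eq:pde2} together with $q=\Amat\na u - b\,u$ to rewrite the two residuals in terms of the errors $u-\ut$ and $q-\qt$. For the first residual I would use $f = -\div q + (c-\div b)u$ to obtain
\[
	f - (c-\div b)\ut + \div\qt = (c-\div b)(u-\ut) - \div(q-\qt),
\]
and for the second, adding and subtracting $q = \Amat\na u - b\,u$ gives
\[
	\qt - \Amat\na\ut + b\,\ut = \Amat\na(u-\ut) - \bigl[(q-\qt) + b(u-\ut)\bigr].
\]

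Next I would expand both squared norms via the identity $\|X+Y\|^2 = \|X\|^2 + \|Y\|^2 + 2\Re\scplt{X}{Y}$ using the weights $(c-\div b)^{-1}$ and $\Amat^{-1}$, respectively. The first expansion produces a cross term $-2\Re\scplt{u-\ut}{\div(q-\qt)}$, which by the integration-by-parts identity \eqref{eq:partint} (since $u-\ut\in\hoga$ and $q-\qt\in\d$) equals $2\Re\scplt{\na(u-\ut)}{q-\qt}$. The second expansion, after using $\scp{\Amat\na(u-\ut)}{\cdot}_{\Amat^{-1}} = \scplt{\na(u-\ut)}{\cdot}$, produces the mixed terms $-2\Re\scplt{\na(u-\ut)}{q-\qt}$ and $-2\Re\scplt{\na(u-\ut)}{b(u-\ut)}$. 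The two occurrences of $2\Re\scplt{\na(u-\ut)}{q-\qt}$ cancel.

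To close the argument I would handle the remaining convective cross term by exploiting that $b$ is real valued, so $\scplt{\na(u-\ut)}{b(u-\ut)} = \scplt{b\cdot\na(u-\ut)}{u-\ut}$; identity \eqref{eq:convmanip} then yields $-2\Re\scplt{\na(u-\ut)}{b(u-\ut)} = -\normltb{u-\ut}^2$. Combining with the surviving weighted $\lt$ norm of $u-\ut$ gives
\[
	\normltcb{u-\ut}^2 - \normltb{u-\ut}^2 = \normltc{u-\ut}^2,
\]
which reassembles exactly $\normmm{(u,q)-(\ut,\qt)}^2$ as defined in the statement, proving \eqref{eq:CRD2_main1}. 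The normalized counterpart \eqref{eq:CRD2_main2} then follows immediately from the isometry property in Remark \ref{rem:isometry}(ii) applied to $\Lcrdd(f)=(u,q)$.

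The main obstacle I anticipate is bookkeeping of the cross terms: unlike the proof of Theorem \ref{thm:CRD1}, where the dual variable is simply $p=\Amat\na u$, here $q$ contains the convective piece $-b\,u$, so the $\Amat^{-1}$-expansion generates an extra mixed term $\scplt{\na(u-\ut)}{b(u-\ut)}$ that must be disposed of via \eqref{eq:convmanip} rather than \eqref{eq:partint}. One must also verify that the weight choice $(c-\div b)^{-1}$ (legitimate under the assumption $c-\div b \ge \lambda > 0$) is consistent with the fact that the final measure contains the \emph{un-shifted} weight $c$ on $u-\ut$; the exchange $\normltcb{\cdot}^2 - \normltb{\cdot}^2 = \normltc{\cdot}^2$ is the precise place where the $-\div b$ contributed by the convective cross term compensates exactly the $-\div b$ carried by the first residual's weight.
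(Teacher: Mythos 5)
Your proposal is correct and follows essentially the same route as the paper's proof: you expand $\Mcrdd(\ut,\qt)$ using the strong form \eqref{eq:pde2} and $q=\Amat\na u-b\,u$, cancel the mixed terms via \eqref{eq:partint}, and absorb the convective cross term through \eqref{eq:convmanip}, which is precisely the paper's step of combining $\norm{u-\ut}^2_{\div b}$ with $\normltcb{u-\ut}^2$ to produce $\normltc{u-\ut}^2$. The derivation of the normalized counterpart from Remark \ref{rem:isometry}(ii) also matches the paper exactly.
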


\begin{proof}
We begin by using \eqref{eq:pde2} and $q=\Amat \na u - b\,u$:
\begin{align*}
\Mcrdd(\ut,\qt)
&=\normltcbi{f - (c-\div b)\ut + \div\qt}^2+\normltAi{\qt - \Amat\na\ut + b\,\ut}^2 \\
&=\normltcbi{(c-\div b)(u-\ut) + \div(\qt-q)}^2 + \normltAi{\qt-q + \Amat\na(u-\ut) + b(\ut-u)}^2 \\
&=\normltcbi{(c-\div b)(u-\ut)}^2+\normltcbi{\div(\qt-q)}^2\\
&\qquad+2\Re\scp{(c-\div b)(u-\ut)}{\div(\qt-q)}_{(c-\div b)^{-1}} \\
&\qquad+\normltAi{\qt-q+b(\ut-u)}^2+\normltAi{\Amat\na(u-\ut)}^2
+2\Re\scp{\qt-q+b(\ut-u)}{\Amat\na(u-\ut)}_{\Amat^{-1}} \\
&=\normltcb{u-\ut}^2 + \normltA{\na(u-\ut)}^2 + \normltAi{\qt-q+b(\ut-u)}^2 + \normltcbi{ \div(\qt-q)}^2 \\
&\qquad+2\Re\scplt{\qt-q}{\na(u-\ut)}
+2\Re\scplt{u-\ut}{\div(\qt-q)}
-2\Re\scplt{u-\ut}{b\cdot\na(u-\ut)} .
\end{align*}
The first two terms in the last line cancel each other due to \eqref{eq:partint}, since $u-\ut \in \hoga$ and $\qt-q \in \d$. By using \eqref{eq:convmanip} we see that the third term in the last line can be written as
\begin{equation*}
	- 2\Re\scplt{u-\ut}{b\cdot\na(u-\ut)} = \norm{u-\ut}^2_{\div b} .
\end{equation*}
We arrive at \eqref{eq:CRD2_main1} by combining this term to the other weighed $\lt$ norm of the difference $u-\ut$. The equality \eqref{eq:CRD2_main2} follows by the isometry property in Remark \ref{rem:isometry}(ii).
\end{proof}

Note that we can set $c=0$ in the result of Theorem \ref{thm:CRD2}, because there is `additional reaction' provided by the term $-(\div b)u$. This corresponds to the case of the convection-diffusion problem. However, if $c=0$, then the result is not valid if $\div b=0$. This means that $b$ cannot be zero in this case, or even a constant vector. The case $c=\div b=0$ will be considered in the next section.

\begin{cor}
\label{cor:diffc}
If $c = 0$, the problem \eqref{eq:pde2} becomes the convection-diffusion equation. For the error equality to hold we need to assume $-\div b \ge b_0 > 0$. The equality of Theorem \ref{thm:CRD2} then becomes
\begin{multline*}
	\normltA{\na(u-\ut)}^2 + \normltAi{q-\qt + b(u-\ut)}^2 + \normltbi{\div(q-\qt)}^2 \\
	= \normltbi{f + (\div b)\ut + \div\qt}^2
		+ \normltAi{\qt - \Amat\na\ut + b\,\ut}^2 .
\end{multline*}
Note that for proper error control boundedness of the domain is now required (see Appendix \ref{app:normequiv}). Naturally, this error equality has a normalized counterpart.
\end{cor}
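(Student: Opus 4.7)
The plan is to derive this as a direct specialization of Theorem \ref{thm:CRD2} to the case $c=0$, with the coefficient assumption $-\div b \ge b_0 > 0$ playing the role previously played by $c-\div b \ge \lambda > 0$. First I would check that the hypotheses of Theorem \ref{thm:CRD2} are satisfied: $c \equiv 0 \ge 0$ is trivial, and $c - \div b = -\div b \ge b_0 > 0$ by assumption, so the theorem applies and yields the identity \eqref{eq:CRD2_main1} with $\Mcrdd(\ut,\qt)$ as defined there.

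Next I would simplify term by term under $c=0$. On the left-hand side, $\normltc{u-\ut}^2$ vanishes since $c=0$, and in the weight $c-\div b$ appearing in $\normltcbi{\div(q-\qt)}^2$ the identity $c-\div b = -\div b$ reduces this term to $\normltbi{\div(q-\qt)}^2$. The remaining two terms $\normltA{\na(u-\ut)}^2$ and $\normltAi{q-\qt + b(u-\ut)}^2$ carry over unchanged. On the right-hand side, the flux-residual term $\normltAi{\qt - \Amat\na\ut + b\,\ut}^2$ is unaffected by the value of $c$, while $\normltcbi{f - (c-\div b)\ut + \div\qt}^2$ becomes $\normltbi{f + (\div b)\ut + \div\qt}^2$ upon using $c-\div b = -\div b$ and a sign flip.

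Combining these simplifications directly yields the stated equality. The normalized version then follows in exactly the same manner from \eqref{eq:CRD2_main2} by noting $\normltcbi{f}^2 = \normltbi{f}^2$ when $c=0$. Finally, the remark about boundedness of the domain for proper error control is justified by the norm equivalence in Appendix \ref{app:normequiv}: when $c = 0$, the term $\normltc{u-\ut}^2$ that would otherwise furnish an $\lt$ bound on $u-\ut$ disappears from the error measure, and recovering such a bound from $\normltA{\na(u-\ut)}^2$ alone requires the Friedrichs inequality \eqref{eq:Cf}, which only holds when $\om$ is bounded in at least one direction. I do not anticipate a substantive obstacle here; the corollary is essentially a notational specialization of Theorem \ref{thm:CRD2}, and the only care needed is to track how the weight $c-\div b$ collapses to $-\div b$ in the various weighted norms.
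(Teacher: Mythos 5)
Your proposal is correct and matches the paper's (implicit) argument exactly: the corollary is obtained by substituting $c=0$ into Theorem \ref{thm:CRD2}, verifying $c-\div b=-\div b\ge b_0>0$, and collapsing the weights $c$ and $c-\div b$ accordingly, with the boundedness remark justified precisely as in Remark \ref{rem:bounded}(ii) of Appendix \ref{app:normequiv}. Nothing is missing; your tracking of the sign flip in $-(c-\div b)\ut \to +(\div b)\ut$ and of the vanishing term $\normltc{u-\ut}^2$ is exactly the care the specialization requires.
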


\begin{rem} We note the following:
\begin{itemize}
\item[\bf(i)] One of the terms in the error measure can be written as
\begin{equation*}
	\normltAi{q-\qt + b(u-\ut)}^2 = \normltAi{\Amat\na u-\qt-b\,\ut}^2 .
\end{equation*}
\item[\bf(ii)] By applying the second norm equivalence in Appendix \ref{app:normequiv} to the result of Theorem \ref{thm:CRD2} one obtains a two-sided estimate in a weighed $\ho\times\d$ -norm. However, these bounds can be quite coarse, which is apparent from the constants appearing in the norm equivalence.
\end{itemize}
\end{rem}


\section{Two-Sided Error Estimates for the Case $c=\div b=0$} \label{sec:APOSTEST}

If $c=0$ and $\div b=0$ simultaneously in some part of the domain, neither of the error equalities of the previous section can be used. For this case we derive a two-sided error bound by using the error equality of Theorem \ref{thm:CRD1}.

In this section, we consider the problem formulation \eqref{eq:pde1} for a bounded domain $\om$ such that the Friedrichs inequality \eqref{eq:Cf} holds. Then we see from \eqref{eq:coer2} that even in the case $c=\div b=0$ the form generated by the left hand side of \eqref{eq:gen1} is coercive so we have a unique solution $u\in\hoga$ by the Lax-Milgram Theorem.

Like before, the real scalar valued reaction coefficient $c\ge0$ belongs to $\li$. We denote by $\om_0$ the part of the domain where $c=0$. In the other part $\om_c:=\om\setminus\om_0$ we assume that $c$ is uniformly positive definite, i.e., $c\ge c_0>0$. We define $\chat := c + \chiom$, where $\chiom$ (defined on the whole domain $\om$) is the characteristic function of $\om_0$. See Figure \ref{fig:division} for an example. Note that $\chat$ belongs to $\li$ and is uniformly positive definite.

\begin{figure}
\centering
{\footnotesize
\begin{tikzpicture}[scale=0.9]
	\fill [gray,opacity=.15] (1,1.7) .. controls (2,0) and (4,1) .. (5,1.0) .. controls (5.5,1) and (6,0.5) .. (6.5,-0.5) .. controls (-0.5,-1) and (1,2) .. (1,1.7);
	\draw [line width=1pt] (1,1.7) .. controls (2,0) and (4,1) .. (5,1.0) .. controls (5.5,1) and (6,0.5) .. (6.5,-0.5) .. controls (-0.5,-1) and (1,2) .. (1,1.7);
	\draw [dashed,line width=1pt]  (1,1.7) .. controls (2,0) and (3,0.2) .. (6.5,-0.5);
	\fill [white] (5,0.2) circle (0.55);				
	\draw [line width=1pt] (5,0.2) circle (0.55);
	\draw (3,1) node [] {$\om$};
	\draw (2.32,0.099) node [white] {$\om_0$}; 
	\draw (2.3,0.1) node [] {$\om_0$};
	\draw (4.02,0.399) node [white] {$\om_c$}; 
	\draw (4,0.4) node [] {$\om_c$};
	\draw [] (7,0.5) .. controls (6.4,0.6) .. (5.75,0.3);
	\draw (7.9,0.3) node [] {$	\begin{array}{r@{$\;$}l}
									c & \ge c_0 > 0 \\
									\chiom & = 0
								\end{array}$ };
	\draw [] (0.4,0.3) .. controls (0.9,0.2) .. (1.5,0.45);
	\draw (-0.5,0.3) node [] {$ 	\begin{array}{r@{$\;$}l}
									c & = 0 \\
									\chiom & = 1
								\end{array}$ };
	\draw [gray] (9.5,1.7) -- (9.5,-0.7);
\end{tikzpicture}
\hskip-2em
\tdplotsetmaincoords{80}{100}
\begin{tikzpicture}[scale=0.6,tdplot_main_coords]
	\fill [gray,opacity=.5] (0.5,1,1.7) .. controls (1,2,0) and (1,3,0.2) .. (1,6.5,-0.5) -- (0,6.5,-0.5) .. controls (0,3,0.2) and (0,2,0) .. (0.5,1,1.7);
	\draw [dashed] (0.5,1,1.7) .. controls (1,2,0) and (1,3,0.2) .. (1,6.5,-0.5);
	\draw [dashed]  (0.5,1,1.7) .. controls (0,2,0) and (0,3,0.2) .. (0,6.5,-0.5);
	\fill [white] (0,5,0.75) .. controls (0,4.3,0.75) and (0,4.3,-0.35) .. (0,5,-0.35) .. controls (0,5.7,-0.35) and (0,5.7,0.75) .. (0,5,0.75);	
	\fill [white] (1,5,0.75) .. controls (1,4.3,0.75) and (1,4.3,-0.35) .. (1,5,-0.35) .. controls (1,5.7,-0.35) and (1,5.7,0.75) .. (1,5,0.75);
	
	\draw [line width=1pt] (0.5,1,1.7) .. controls (0,2,0) and (0,4,1) .. (0,5,1.0) .. controls (0,5.5,1) and (0,6,0.5) .. (0,6.5,-0.5);
	\draw [gray] (0,6.5,-0.5) .. controls (0,-0.5,-1) and (0,1,2) .. (0.5,1,1.7);
	\draw [gray] (0,5,0.75) .. controls (0,4.3,0.75) and (0,4.3,-0.35) .. (0,5,-0.35) .. controls (0,5.7,-0.35) and (0,5.7,0.75) .. (0,5,0.75);
	\draw [black,line width=1pt] (0,4.56,0.5) .. controls (0,4.40,0.4) and (0,4.39,-0.57) .. (0,5.25,-0.3);
	\draw [line width=1pt] (0.5,1,1.7) .. controls (1,2,0) and (1,4,1) .. (1,5,1.0) .. controls (1,5.5,1) and (1,6,0.5) .. (1,6.5,-0.5) .. controls (1,-0.5,-1) and (1,1,2) .. (0.5,1,1.7);
	\draw [line width=1pt] (1,5,0.75) .. controls (1,4.3,0.75) and (1,4.3,-0.35) .. (1,5,-0.35) .. controls (1,5.7,-0.35) and (1,5.7,0.75) .. (1,5,0.75);
	\draw [line width=1pt] (0,6.5,-0.5) -- (1,6.5,-0.5);
	\fill [white] (0,0,-2) circle (0.01);
\end{tikzpicture}
}
\caption{Division of a bounded three dimensional domain $\om$ into two subdomains according to the values of $c$.}
\label{fig:division}
\end{figure}
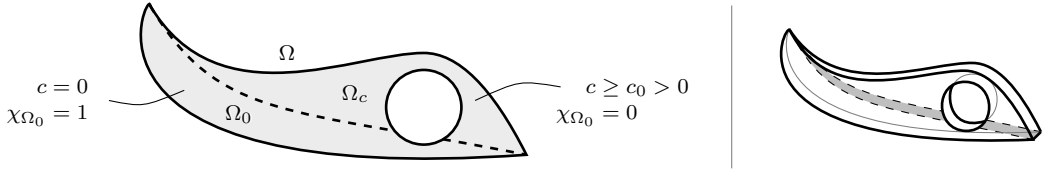

We present the third main result of the paper.

\begin{theo} 
\label{thm:CRD3}
Let $(u,p)\in\hoga\times\d$ be the exact solution of \eqref{eq:pde1}, and assume $c \ge c_0 > 0$ in $\om_c$ and $c-\div b \ge 0$. Let $(\ut,\pt)\in\hoga\times\d$ be arbitrary. Then
\begin{equation} \label{eq:CRD3}
	\norm{f - b\cdot\na\ut + \div\pt}^2_{\om_0} + \frac{1}{2} \normltAi{\pt - \Amat\na\ut}^2
	\leq \norms{(u,p)-(\ut,\pt)}^2 \leq \Mcrdmaj(\ut,\pt)
\end{equation}
holds. If in addition $-\div b \ge 0$ in $\om_c$, we have the improved lower bound
\begin{equation} \label{eq:CRD3_min}
	\Mcrdmin(\ut,\pt) \leq \norms{(u,p)-(\ut,\pt)}^2 .
\end{equation}	
Here
\begin{align*}
	\norms{(u,p)-(\ut,\pt)}^2 := & \, \norm{u-\ut}^2_{c-\div b} + \normltA{\na(u-\ut)}^2
	+ \normltAi{p-\pt}^2 + \norm{b\cdot\na(u-\ut)-\div(p-\pt)}^2_{\chat^{-1}} , \\
	\Mcrdmaj(\ut,\pt) := & \, 2 \left( \norm{f - c\,\ut - b\cdot\na\ut + \div\pt}^2_{\om_c,c^{-1}}
	+ \normltAi{\pt - \Amat\na\ut}^2 \right)
	+ \left( 1+4\MC \right) \norm{f - b\cdot\na\ut + \div\pt}^2_{\om_0} , \\
	\Mcrdmin(\ut,\pt) := & \, \frac{1}{2} \left( \norm{f - c\,\ut - b\cdot\na\ut + \div\pt}^2_{\om_c,c^{-1}} + \normltAi{\pt - \Amat\na\ut}^2 \right)
	+ \norm{f - b\cdot\na\ut + \div\pt}^2_{\om_0} .
\end{align*}
\end{theo}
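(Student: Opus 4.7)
I would introduce the error variables $e_u := u-\ut \in \hoga$ and $e_p := p-\pt \in \d$. From \eqref{eq:pde1} the pointwise identities $c\,e_u + b\cdot\na e_u - \div e_p = R$ with $R := f - c\,\ut - b\cdot\na\ut + \div\pt$ and $\Amat\na e_u - e_p = \pt - \Amat\na\ut$ hold, and on $\om_0$ (where $c = 0$) the first reduces to $b\cdot\na e_u - \div e_p = r_0$ with $r_0 := f - b\cdot\na\ut + \div\pt$.

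The basic lower bound \eqref{eq:CRD3} I would obtain by bounding the four summands defining $\norms{(u,p)-(\ut,\pt)}^2$ individually: $\norm{e_u}^2_{c-\div b}\ge 0$ by hypothesis; $\norm{b\cdot\na e_u-\div e_p}^2_{\chat^{-1}} \ge \norm{r_0}^2_{\om_0}$, using the pointwise identity on $\om_0$ together with $\chat=1$ there; and $\normltA{\na e_u}^2 + \normltAi{e_p}^2 \ge \tfrac{1}{2}\normltAi{\Amat\na e_u - e_p}^2 = \tfrac{1}{2}\normltAi{\pt-\Amat\na\ut}^2$ by the parallelogram inequality. For the improved bound \eqref{eq:CRD3_min}, the extra hypothesis $-\div b\ge 0$ in $\om_c$ together with $c-\div b\ge 0$ globally gives $-\div b \ge 0$ on all of $\om$, so $\norm{e_u}^2_{c-\div b} \ge \norm{e_u}^2_c = \norm{c\,e_u}^2_{\om_c,c^{-1}}$; pairing this with the $\om_c$-portion of the $\chat^{-1}$-weighted norm via $\norm{a}^2_{\om_c,c^{-1}} + \norm{b}^2_{\om_c,c^{-1}} \ge \tfrac{1}{2}\norm{a+b}^2_{\om_c,c^{-1}}$ applied to $a = c\,e_u$ and $b = b\cdot\na e_u-\div e_p$ produces the missing $\tfrac{1}{2}\norm{R}^2_{\om_c,c^{-1}}$.

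For the upper bound I would first establish the key identity
\[
\norms{(u,p)-(\ut,\pt)}^2 = \norm{R}^2_{\chat^{-1}} + \normltAi{\pt-\Amat\na\ut}^2 + 2\Re\,\scplt{e_u}{r_0}_{\om_0}
\]
by redoing the algebraic expansion from the proof of Theorem \ref{thm:CRD1} with $\chat^{-1}$ replacing $c^{-1}$. The essential subtlety is that the resulting weighted cross term carries the factor $c\chat^{-1} = \chi_{\om_c}$: splitting $\chi_{\om_c}=1-\chi_{\om_0}$, the $1$-part reproduces, through \eqref{eq:convmanip} and \eqref{eq:partint}, the same cancellation with the $\Amat^{-1}$-cross term of $\normltAi{\pt-\Amat\na\ut}^2$ as in Theorem \ref{thm:CRD1} and delivers the convective contribution $\norm{e_u}^2_{-\div b}$, whereas the $\chi_{\om_0}$-part, via $b\cdot\na e_u-\div e_p = r_0$ on $\om_0$, yields the residual coupling which rearranges into the displayed identity. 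The cross term is then controlled by Cauchy-Schwarz and a weighted Young inequality, estimating $\norm{e_u}^2_{\om_0} \le \Cf^2\,\norm{\na e_u}^2 \le \MC\,\normltA{\na e_u}^2$ via \eqref{eq:Cf} and \eqref{eq:Amat}, absorbing $\normltA{\na e_u}^2 \le \norms{(u,p)-(\ut,\pt)}^2$ into the left-hand side, and using $\norm{R}^2_{\chat^{-1}} = \norm{R}^2_{\om_c,c^{-1}} + \norm{r_0}^2_{\om_0}$ to reassemble everything into the form of $\Mcrdmaj$.

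I expect the main technical obstacle to be the derivation of the key identity, since replacing $c^{-1}$ by $\chat^{-1}$ breaks the clean cross-term cancellation of Theorem \ref{thm:CRD1} and forces the careful splitting through $\chi_{\om_c}=1-\chi_{\om_0}$; once the identity is in hand, the two-sided bounds reduce to routine Young-type manipulations with the Friedrichs inequality.
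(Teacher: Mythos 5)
Your key identity is correct, and it is in substance the same one the paper uses: the paper obtains it by adding $\chiom u$ to both sides of \eqref{eq:pde1}, so that $(u,p)$ solves a problem with reaction coefficient $\chat$ and source $f+\chiom u$, and then invokes Theorem \ref{thm:CRD1} with these data; expanding $\norm{f+\chiom u-\chat\,\ut-b\cdot\na\ut+\div\pt}^2_{\chat^{-1}}$ and cancelling $\norm{u-\ut}^2_{\om_0}$ against the $\chat-\div b$ weight on the left reproduces exactly your identity with the cross term $2\Re\scp{u-\ut}{f-b\cdot\na\ut+\div\pt}_{\om_0}$. Your direct expansion through $c\,\chat^{-1}=\chi_{\om_c}=1-\chiom$ is a cosmetic variant of the same algebra. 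Both of your lower bounds are also fine and coincide with the paper's steps \eqref{eq:CRD3_5}--\eqref{eq:CRD3_7}; in particular your remark that $c-\div b\ge0$ forces $-\div b\ge0$ on $\om_0$ (where $c=0$), so that the additional hypothesis gives $-\div b\ge0$ on all of $\om$, is correct and is implicitly what the paper uses.

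There is, however, a genuine quantitative gap in your upper bound: the absorption step as you describe it does not produce the stated constant $1+4\MC$. From the identity, Cauchy--Schwarz, Friedrichs, and Young give, for any $\gamma>0$,
\begin{equation*}
	\norms{(u,p)-(\ut,\pt)}^2
	\leq \norm{f-c\,\ut-b\cdot\na\ut+\div\pt}^2_{\om_c,c^{-1}}
	+ \left(1+\gamma\MC\right)\norm{f-b\cdot\na\ut+\div\pt}^2_{\om_0}
	+ \normltAi{\pt-\Amat\na\ut}^2
	+ \frac{1}{\gamma}\normltA{\na(u-\ut)}^2 ,
\end{equation*}
and absorbing via $\normltA{\na(u-\ut)}^2\le\norms{(u,p)-(\ut,\pt)}^2$ yields, after dividing by $1-\tfrac1\gamma$ and taking $\gamma=2$, the coefficient $2(1+2\MC)=2+4\MC$ on the $\om_0$-residual; no other choice of $\gamma$ repairs this while keeping the factor $2$ on the remaining terms. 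The missing idea is the paper's observation \eqref{eq:CRD3_5}: on $\om_0$ one has the \emph{exact equality} $\norm{b\cdot\na(u-\ut)-\div(p-\pt)}^2_{\om_0,\chat^{-1}}=\norm{f-b\cdot\na\ut+\div\pt}^2_{\om_0}$, so one should absorb only $\frac12\normltA{\na(u-\ut)}^2$ into the left-hand side (keeping the other error terms intact, as in \eqref{eq:CRD3_4}), move this $\om_0$-term to the right before rescaling (which lowers the residual coefficient from $1+2\MC$ to $2\MC$), multiply the inequality by $2$, and then add the term back to both sides; this yields exactly $\Mcrdmaj$ with $1+4\MC$. You already exploit precisely this equality in your lower bound, so the repair is one line — but as written your argument proves the theorem only with the weaker coefficient $2+4\MC$.
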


\begin{proof}
By adding $\chiom u$ to both sides of the problem \eqref{eq:pde1} we obtain
\begin{equation*}
\begin{array}{r@{$\;$}c@{$\;$}l l}
	p & = & \Amat \na u & \quad \textrm{in } \om , \\
	-\div p + b \cdot \na u + \chat\,u & = & f + \chiom u & \quad \textrm{in } \om , \\
	u & = & 0 & \quad \textrm{on } \p\om ,
\end{array}
\end{equation*}
for which we can apply the result of Theorem \ref{thm:CRD1} with reaction coefficient $\chat$ and right hand side $f+\chiom u$:
\begin{multline} \label{eq:CRD3_1}
	\norm{u-\ut}^2_{\chat-\div b} + \normltA{\na(u-\ut)}^2 +
	\normltAi{p-\pt}^2 + \norm{b\cdot\na(u-\ut)-\div(p-\pt)}^2_{\chat^{-1}} \\
	= \norm{f + \chiom u - \chat\,\ut - b\cdot\na\ut + \div\pt}^2_{\chat^{-1}}
	+ \normltAi{\pt - \Amat\na\ut}^2 .
\end{multline}
The rest of the proof concentrates on removing the unknown exact solution from the right hand side. The first term on the right hand side can be written as
\begin{align}
	& \norm{f + \chiom u - \chat\,\ut - b\cdot\na\ut + \div\pt}^2_{\chat^{-1}} \label{eq:CRD3_2} \\
	& = \norm{f + \chiom(u-\ut) - c\,\ut - b\cdot\na\ut + \div\pt}^2_{\chat^{-1}} \nonumber \\
	& = \norm{f - c\,\ut - b\cdot\na\ut + \div\pt}^2_{\om_c,c^{-1}} +
		\norm{f + u-\ut - b\cdot\na\ut + \div\pt}^2_{\om_0} \nonumber \\
	& = \norm{f - c\,\ut - b\cdot\na\ut + \div\pt}^2_{\om_c,c^{-1}} +
		\norm{f - b\cdot\na\ut + \div\pt}^2_{\om_0} + \norm{u-\ut}^2_{\om_0} \nonumber \\
	&	\qquad + 2\Re\scp{f - b\cdot\na\ut + \div\pt}{u-\ut}_{\om_0} . \nonumber
\end{align}
The last term can be estimated from above by
\begin{align}
	2\Re\scp{f - b\cdot\na\ut + \div\pt}{u-\ut}_{\om_0}
	& \leq 2 \norm{f - b\cdot\na\ut + \div\pt}_{\om_0} \norm{u-\ut}_{\om_0} \label{eq:CRD3_3} \\
	& \leq 2 \norm{f - b\cdot\na\ut + \div\pt}_{\om_0} \norm{u-\ut} \nonumber \\
	& \leq 2 \frac{C_F}{\sqrt{\alpha}} \norm{f - b\cdot\na\ut + \div\pt}_{\om_0} \normltA{\na(u-\ut)} \nonumber \\
	& \leq \gamma \MC \norm{f - b\cdot\na\ut + \div\pt}_{\om_0}^2 + \frac{1}{\gamma} \normltA{\na(u-\ut)}^2 , \nonumber
\end{align}
which holds for all $\gamma>0$. By combining \eqref{eq:CRD3_1}--\eqref{eq:CRD3_3} and choosing $\gamma=2$ we obtain
\begin{multline*}
	\norm{u-\ut}^2_{\chat-\div b} + \normltA{\na(u-\ut)}^2
	+ \normltAi{p-\pt}^2 + \norm{b\cdot\na(u-\ut)-\div(p-\pt)}^2_{\chat^{-1}} \\
	\leq \norm{f - c\,\ut - b\cdot\na\ut + \div\pt}^2_{\om_c,c^{-1}}
	+ \left( 1+2\MC \right) \norm{f - b\cdot\na\ut + \div\pt}^2_{\om_0}
	+ \normltAi{\pt - \Amat\na\ut}^2 \\
	+ \norm{u-\ut}^2_{\om_0}
	+ \frac{1}{2} \normltA{\na(u-\ut)}^2 .
\end{multline*}
By moving the last two terms to the left hand side we obtain
\begin{multline} \label{eq:CRD3_4}
	\norm{u-\ut}^2_{c-\div b} + \frac{1}{2} \normltA{\na(u-\ut)}^2
	+ \normltAi{p-\pt}^2 + \norm{b\cdot\na(u-\ut)-\div(p-\pt)}^2_{\chat^{-1}} \\
	\leq \norm{f - c\,\ut - b\cdot\na\ut + \div\pt}^2_{\om_c,c^{-1}}
	+ \left( 1+2\MC \right) \norm{f - b\cdot\na\ut + \div\pt}^2_{\om_0}
	+ \normltAi{\pt - \Amat\na\ut}^2 .
\end{multline}
Since in $\om_0$ we have
\begin{equation} \label{eq:CRD3_5}
	\norm{b\cdot\na(u-\ut)-\div(p-\pt)}^2_{\om_0,\chat^{-1}}
	= \norm{b\cdot\na(u-\ut)-\div(p-\pt)}^2_{\om_0}
	= \norm{f - b\cdot\na\ut + \div\pt}^2_{\om_0}
\end{equation}
we can multiply \eqref{eq:CRD3_4} with any constant without affecting this term in the estimate. We have then arrived at the upper bound in \eqref{eq:CRD3}. The lower bound in \eqref{eq:CRD3} is obtained simply from \eqref{eq:CRD3_5} and inserting $p-\Amat\na u=0$ in
\begin{equation} \label{eq:CRD3_6}
	\normltAi{\pt - \Amat\na\ut}^2
	= \normltAi{\pt-p + \Amat\na(u-\ut)}^2
	\leq 2 \left( \normltAi{\pt-p}^2 + \normltA{\na(u-\ut)}^2 \right) .
\end{equation}
If $-\div b \ge 0$ in $\om_c$, we can also write (note that we have assumed that $c-\div b \ge 0$ holds in the whole domain)
\begin{align}
	\norm{f - c\,\ut - b\cdot\na\ut + \div\pt}^2_{\om_c,c^{-1}}
	& = \norm{ c(u-\ut) + b\cdot\na(u-\ut) + \div(\pt-p)}^2_{\om_c,c^{-1}} \label{eq:CRD3_7} \\
	& \leq 2\left( \norm{u-\ut}^2_{\om_c,c}
		+ \norm{b\cdot\na(u-\ut) + \div(\pt-p)}^2_{\om_c,c^{-1}} \right) \nonumber \\
	& \leq 2\left( \norm{u-\ut}^2_{\om_c,c-\div b}
		+ \norm{b\cdot\na(u-\ut) + \div(\pt-p)}^2_{\om_c,c^{-1}} \right) \nonumber \\
	& \leq 2\left( \normltcb{u-\ut}^2
		+ \norm{b\cdot\na(u-\ut) + \div(\pt-p)}^2_{\om_c,c^{-1}} \right) . \nonumber
\end{align}
The lower bound \eqref{eq:CRD3_min} results then by combining \eqref{eq:CRD3_5}--\eqref{eq:CRD3_7}.
\end{proof}

The above result is more general than the error equalities of the previous section in the sense that there is more freedom for the values of the reaction coefficient $c$ and the convection vector $b$. However, to achieve this we needed to assume boundedness of the domain, and naturally the Friedrichs constant entered the estimates. We also note that the error equalities of the previous section cannot be recovered from Theorem \ref{thm:CRD3}.

\begin{cor} Depending on the reaction coefficient and the convection vector the result of Theorem \ref{thm:CRD3} takes the following forms:
\label{cor:diffcb}
\begin{itemize}
\item[\bf(i)] If $b=0$ the problem \eqref{eq:pde1} becomes the reaction-diffusion problem with reaction limited to $\om_c$. The result holds when $c\ge c_0>0$ in $\om_c$ and it reads as
\begin{align*}
	& \frac{1}{2} \left( \norm{f - c\,\ut + \div\pt}^2_{\om_c,c^{-1}}
		+ \normltAi{\pt - \Amat\na\ut}^2 \right)
		+ \norm{f + \div\pt}^2_{\om_0} \\
	& \qquad \leq \norm{u-\ut}_{\om_c,c}^2 + \normltA{\na(u-\ut)}^2
		+ \normltAi{p-\pt}^2 + \norm{\div(p-\pt)}^2_{\chat^{-1}} \\
	& \qquad\qquad \leq 2 \left( \norm{f - c\,\ut + \div\pt}^2_{\om_c,c^{-1}}
		+ \normltAi{\pt - \Amat\na\ut}^2 \right)
		+ \left( 1+4\MC \right) \norm{f + \div\pt}^2_{\om_0} .
\end{align*}
If $\om_c = \om$ (then $\om_0=\emptyset$) the error equality of Corollary \ref{cor:diffb} holds (and the domain may be unbounded).
\item[\bf(ii)] If $c=0$ ($\om_c=\emptyset, \om_0=\om$) the problem \eqref{eq:pde1} becomes the convection-diffusion problem. The result holds when $-\div b \ge 0$, and it reads as
\begin{align*}
	& \frac{1}{2} \normltAi{\pt - \Amat\na\ut}^2 + \norm{f - b\cdot\na\ut + \div\pt}^2 \\
	& \qquad \leq \normltb{u-\ut}^2 + \normltA{\na(u-\ut)}^2 + \normltAi{p-\pt}^2
		+ \norm{b\cdot\na(u-\ut)-\div(p-\pt)}^2 \\
	& \qquad\qquad \leq 2\normltAi{\pt - \Amat\na\ut}^2
		+ \left( 1+4\MC \right) \norm{f - b\cdot\na\ut + \div\pt}^2 .
\end{align*}
If $-\div b \ge b_0>0$ the error equality of Corollary \ref{cor:diffc} holds.
\item[\bf(iii)] If $c=b=0$ ($\om_c=\emptyset, \om_0=\om$) the problem \eqref{eq:pde1} becomes the diffusion equation and we have the estimate
\begin{align*}
	& \frac{1}{2} \normltAi{\pt - \Amat\na\ut}^2 + \norm{f + \div\pt}^2 \\
	& \qquad \leq \normltA{\na(u-\ut)}^2 + \normltAi{p-\pt}^2 + \norm{\div(p-\pt)}^2 \\
	& \qquad\qquad \leq 2\normltAi{\pt - \Amat\na\ut}^2
		+ \left( 1+4\MC \right) \norm{f + \div\pt}^2 ,
\end{align*}
which is similar to the two-sided estimate derived (in the real case) in \cite{repinsautersmolianskiaposttwosideell}.
\end{itemize}
\end{cor}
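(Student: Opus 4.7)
The plan is to deduce all three cases directly from Theorem \ref{thm:CRD3} by specializing the data $(c,b)$ and checking which hypotheses reduce to the assumptions stated in each part. In every case the key observation is that $\om_0$ is defined as the region where $c=0$, so with $c=0$ globally one has $\om_0=\om$ and $\om_c=\emptyset$, while when $c\ge c_0>0$ holds globally one has the opposite situation. This makes the $\om_c$- and $\om_0$-subintegrals in $\norms{\cdot}^2$, $\Mcrdmaj$, and $\Mcrdmin$ either collapse to an integral over $\om$ or disappear entirely, and reduces $\chat=c+\chiom$ to a constant weight on whichever subdomain survives.

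For part (i) ($b=0$) I would first verify that $\div b=0$ makes both $c-\div b\ge0$ and $-\div b\ge0$ on $\om_c$ automatic, so the full conclusion \eqref{eq:CRD3}--\eqref{eq:CRD3_min} of Theorem \ref{thm:CRD3} is available. Substituting $b=0$ into the four defining expressions, and using that $c=0$ on $\om_0$, I collapse $\norm{\cdot}^2_{c-\div b}$ to $\norm{u-\ut}^2_{\om_c,c}$, reduce $\norm{b\cdot\na(u-\ut)-\div(p-\pt)}^2_{\chat^{-1}}$ to $\norm{\div(p-\pt)}^2_{\chat^{-1}}$, and delete the $b\cdot\na\ut$ terms from $\Mcrdmaj$ and $\Mcrdmin$; this gives the stated two-sided estimate. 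For the subcase $\om_c=\om$ (so $\om_0=\emptyset$) I would bypass Theorem \ref{thm:CRD3} entirely and directly quote Corollary \ref{cor:diffb}, which is an equality inherited from Theorem \ref{thm:CRD1}; this explains why no boundedness of $\om$ is needed in that subcase.

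For parts (ii) and (iii) I would observe that $\om_c=\emptyset$ and $\om_0=\om$, so the hypothesis $c\ge c_0>0$ in $\om_c$ is vacuous, and likewise the extra assumption $-\div b\ge0$ in $\om_c$ needed for the improved lower bound \eqref{eq:CRD3_min}. The only nontrivial requirement is $c-\div b\ge0$, which becomes the assumed $-\div b\ge0$ in (ii) and is automatic in (iii). On $\om_0=\om$ one has $\chat=\chiom=1$, so the $\chat^{-1}$-weights become trivial, and $\norm{\cdot}^2_{c-\div b}$ becomes $\normltb{\cdot}^2$ in (ii) and vanishes altogether in (iii); the $\om_c$-integrals drop out in both cases. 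Inserting these simplifications into \eqref{eq:CRD3}--\eqref{eq:CRD3_min} yields the two stated two-sided estimates. For the stronger equality subcase of (ii), namely $-\div b\ge b_0>0$, I would apply Theorem \ref{thm:CRD2} with $c=0$ to recover Corollary \ref{cor:diffc}.

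There is no real technical obstacle; the work is purely bookkeeping, and the only place where care is required is the matching of hypothesis lists in each case, particularly the vacuous conditions on the empty set $\om_c$ and the verification that $\chat$ collapses to the correct weight on the surviving subdomain. The subcases that yield equalities (the reaction-diffusion situation $\om_c=\om$ in (i) and the strict-convection situation $-\div b\ge b_0>0$ in (ii)) must be handled by invoking Corollaries \ref{cor:diffb} and \ref{cor:diffc}, respectively, rather than by specializing Theorem \ref{thm:CRD3}, because the latter always produces a two-sided inequality rather than an equality.
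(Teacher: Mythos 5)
Your proposal is correct and matches the paper's (implicit) argument exactly: the corollary is obtained by direct specialization of Theorem \ref{thm:CRD3}, noting that $b=0$ makes $-\div b\ge0$ on $\om_c$ automatic (so the improved lower bound $\Mcrdmin$ applies in (i)), that $\om_c=\emptyset$ renders the corresponding hypotheses and subintegrals vacuous in (ii) and (iii) with $\chat\equiv1$ on $\om_0=\om$, and that the equality subcases come from Corollaries \ref{cor:diffb} and \ref{cor:diffc} (i.e.\ Theorems \ref{thm:CRD1} and \ref{thm:CRD2}) rather than from Theorem \ref{thm:CRD3}. Your bookkeeping of the hypothesis reductions and of the collapse of the weighted norms is accurate throughout.
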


\begin{rem} \mbox{}
\begin{itemize}
\item[\bf(i)] It is easy to verify (in the case $-\div b \ge 0$ in $\om_c$) that
\begin{equation*}
	\sqrt{\frac{\Mcrdmaj(\ut,\pt)}{\Mcrdmin(\ut,\pt)}} \leq \max\left\{2,\sqrt{1+4\MC}\right\} .
\end{equation*}
\item[\bf(ii)] By calculating estimates for the solution operator norm the two-sided bounds exposed in this section can be normalized in the same way as in the previous section (one way to obtain estimates for the solution operator norm is setting $\ut=\pt=0$ in Theorem \ref{thm:CRD3}).
\item[\bf(iii)] The upper bound in Theorem \ref{eq:CRD3} can be improved if one leaves the Young constant $\gamma$ in the estimate (in \eqref{eq:CRD3_3} this constant was chosen to be 2). Then the upper bound would contain this constant which could be chosen such that it minimizes the value of the upper bound.
\end{itemize}
\end{rem}


\section{Further Remarks}

In this section we briefly comment on deriving error estimates for the primal variable and error indication properties for mixed approximations. For brevity we limit ourselves to the problem \eqref{eq:pde1} and the error equality of Theorem \ref{thm:CRD1}. In the following we will utilize just the following inequalities:
\begin{align} 
	\normltAi{\pt - \Amat\na\ut}^2
	& = \normltAi{\pt-p + \Amat\na(u-\ut)}^2 \label{eq:low1} \\
	& \leq 2 \left( \normltAi{\pt-p}^2 + \normltA{\na(u-\ut)}^2 \right) \nonumber , \\
	\normltci{f - c\,\ut - b\cdot\na\ut + \div\pt}^2
	& = \normltci{c(u-\ut) + b\cdot\na(u-\ut) - \div(p-\pt)}^2 \label{eq:low2} \\
	& \leq 2 \left( \normltc{u-\ut}^2 + \normltci{b\cdot\na(u-\ut) - \div(p-\pt)}^2 \right) \nonumber .
\end{align}

Theorem \ref{thm:CRD1} reads as
\begin{multline} \label{eq:erroreq}
	\normltcb{u-\ut}^2 + \normltA{\na(u-\ut)}^2 +
	\normltAi{p-\pt}^2 + \normltci{b\cdot\na(u-\ut)-\div(p-\pt)}^2 \\
	= \normltci{f - c\,\ut - b\cdot\na\ut + \div\pt}^2
	+ \normltAi{\pt - \Amat\na\ut}^2 .
\end{multline}
The first obvious estimate results from simply omitting the last two terms on the left hand side. Then the left hand side becomes independent of $\pt$, and so it becomes arbitrary on the right hand side, and we have
\begin{equation*}
	\forall \phi\in\d
	\qquad
	\normltcb{u-\ut}^2 + \normltA{\na(u-\ut)}^2
	\leq \normltci{f - c\,\ut - b\cdot\na\ut + \div\phi}^2
	+ \normltAi{\phi - \Amat\na\ut}^2 ,
\end{equation*}
where we have changed $\pt$ to $\phi$ to emphasize that it is now an arbitrary function from $\d$. As is usual for problems with a convective term, such a form of a functional upper bound may unfortunately not be sharp: to get an idea of the over-estimation we set $\phi = p$; then the r.h.s. becomes $\normltcb{u-\ut}^2 + \normltci{b\cdot\na(u-\ut)}^2 + \normltA{\na(u-\ut)}^2$. In \cite{kuzminhannukainenapost} the authors demonstrated that sharp upper bounds can be derived as well. The estimate from \cite{kuzminhannukainenapost} contains two free functions (in contrast to the one free function in the above estimate) with respect to which the upper bound must be minimized in order to obtain good estimates of the global primal error.

Next we derive estimates for the $\lt$-part of the primal error: by estimating the left hand side of \eqref{eq:erroreq} from below by (omitting the last term and) using \eqref{eq:low1} we obtain
\begin{equation} \label{eq:ubound1}
	\forall \phi\in\d
	\qquad
	\normltcb{u-\ut}^2
	\leq \normltci{f - c\,\ut - b\cdot\na\ut + \div\phi}^2
	+ \frac{1}{2} \normltAi{\phi - \Amat\na\ut}^2 .
\end{equation}
If $-\div b \ge b_0 > 0$ we can apply both \eqref{eq:low1} and \eqref{eq:low2} to the error equality \eqref{eq:erroreq} and obtain
\begin{equation} \label{eq:ubound2}
	\forall \phi\in\d
	\qquad
	\normltb{u-\ut}^2
	\leq \frac{1}{2} \left( \normltci{f - c\,\ut - b\cdot\na\ut + \div\phi}^2
	+ \normltAi{\phi - \Amat\na\ut}^2 \right) .
\end{equation}

As mentioned in the introduction, mixed methods usually produce approximations with less regularity than assumed in this paper. A first simple idea might be to post-process the approximation to obtain upper bounds for errors. For example, assume that $\ut\in\lt$ and that we have a post-processing operator $G:\lt\to\hoga$ at our disposal (for example, for piecewise constant $\ut$ one could use standard nodal averaging). By using \eqref{eq:ubound1} we then have the estimate
\begin{align*}
	\normltcb{u-\ut}
	& \leq \normltcb{u-G(\ut)} + \normltcb{G(\ut)-\ut} \\
	& \leq \sqrt{\normltci{f - c\,G(\ut) - b\cdot\na G(\ut) + \div\phi}^2
	+ \frac{1}{2} \normltAi{\phi - \Amat\na G(\ut)}^2}
	+ \normltcb{G(\ut)-\ut}
\end{align*}
for any $\phi\in\d$. If $-\div b \ge b_0 > 0$ we can use \eqref{eq:ubound2} in an similar fashion to obtain
\begin{align*}
	\normltb{u-\ut}
	& \leq \normltb{u-G(\ut)} + \normltb{G(\ut)-\ut} \\
	& \leq \frac{1}{\sqrt{2}} \sqrt{\normltci{f - c\,G(\ut) - b\cdot\na G(\ut) + \div\phi}^2
	+ \normltAi{\phi - \Amat\na G(\ut)}^2}
	+ \normltb{G(\ut)-\ut}
\end{align*}
for any $\phi\in\d$.

Up until now only global values of the error have been discussed. However, estimating the error distribution of an approximation is an important task as well since this is essential for adaptive mesh refinement. Let $\elems$ denote a discretization of the domain $\om$ into a mesh of non-overlapping elements $\elem$. Note that we assume $\bigcup_{\elem\in\elems} \ol{\elem} = \ol{\Omega}$, i.e., in particular that the boundary of $\Omega$ is exactly represented by the mesh. This is necessary in order to have conforming approximations in the first place: they must satisfy exactly the imposed boundary conditions. The upper bound $\Mcrd$ from Theorem \ref{thm:CRD1} generates the following error indicator:
\begin{equation*}
	\eta_\elem(\ut,\pt) :=
	\sqrt{\norm{f - c\,\ut - b\cdot\na\ut + \div\pt}_{\elem,c^{-1}}^2
	+ \norm{\pt - \Amat\na\ut}^2_{\elem,\Amat^{-1}}} ,
\end{equation*}
whose purpose is to indicate the exact error distribution
\begin{equation*}
	e_\elem(\ut,\pt) :=
	\sqrt{ \norm{u-\ut}_{\elem,c-\div b}^2
	+ \norm{\na(u-\ut)}_{\elem,\Amat}^2
	+ \norm{p-\pt}_{\elem,\Amat^{-1}}^2
	+ \norm{b\cdot\na(u-\ut)-\div(p-\pt)}_{\elem,c^{-1}}^2 } .
\end{equation*}
In the following
\begin{equation*}
	\eta:=\sqrt{\sum_{\elem\in\elems} \eta_\elem^2}
	\qquad \textrm{and} \qquad
	e:=\sqrt{\sum_{\elem\in\elems} e_\elem^2} .
\end{equation*}
The often cited global requirement for error indicators reads as $\ul{C}\eta \leq e \leq \ol{C} \eta$, where $\ul{C},\ol{C}>0$ are called the global efficiency constant and global reliability constant, respectively. Obviously in this case $\ul{C} = \ol{C} = 1$, which is the best case possible. What remains to be checked are the values of the local efficiency constants $C_T>0$ in the local requirement $C_\elem \eta_\elem \leq e_\elem$. In the case $-\div b \ge 0$ this is particularly simple: since the inequalities \eqref{eq:low1} and \eqref{eq:low2} result simply from applying the triangle inequality and the Young inequality, they hold also in any subdomain, and directly give $\eta_\elem^2 \leq 2 e_\elem^2$. This shows that $C_\elem = 1/\sqrt{2}>0.7$ for any element $\elem$ regardless of shape or size.

Finally we note that the (non-normalized) error equalities and bounds exposed in this paper hold also for non-homogenous Dirichlet boundary conditions provided that the approximation $\ut$ satisfies the boundary condition exactly: in the proofs we require that $u-\ut$ vanishes on the boundary.


\bibliographystyle{plain} 
\bibliography{biblio}


\appendix

\section{Proof of Identity \eqref{eq:identity1}}
\label{app:identity1}

For an arbitrary domain $\om$ we can define $\hoga:=\ol{\ciga}^{\ho}$. Let $v,w\in\hoga$. Because $\ciga$ is dense in $\hoga$, the following two sequences exist
\begin{equation*}
	(w_n),(v_n) \subset \ciga(\om)
	\qquad w_n \rightarrow w, \quad v_n \rightarrow v
	\qquad \textrm{in} \, \ho .
\end{equation*}
It is easy to show that
\begin{equation*}
	v_n w_n \rightarrow v \, w,
	\qquad \na(v_n w_n) \rightarrow \na(v \, w)
	\qquad \textrm{in} \, \lo .
\end{equation*}
Let $b$ be a vector valued function from $\li$ such that $\div b \in \li$. We then have
\begin{equation*}
	\scplt{v \, w}{\div b}
	\leftarrow
	\scplt{v_n w_n}{\div b} = - \scplt{\na(v_n w_n)}{b}
	\rightarrow
	- \scplt{\na(v \, w)}{b} .
\end{equation*}
From this we obtain
\begin{equation*}
	\scplt{v \, w}{\div b} = - \scplt{\na(v \, w)}{b} = - \scplt{w \na v}{b} - \scp{v \na w}{b} ,
\end{equation*}
which gives \eqref{eq:identity1} after re-arranging the functions inside the inner products.


\section{Norm Equivalences}
\label{app:normequiv}

We show that the norms $\normm{\cdot}$ and $\normmm{\cdot}$ are equivalent to weighed $\ho \times \d$ -norms. Let $(x,y) \in \ho \times \d$ be arbitrary.

\begin{theo} \label{thm:normeq1}
Let the assumptions on the material coefficients of Remark \ref{rem:isometry}(i) hold. We have
\begin{equation} \label{eq:normeq1}
	\left( \max \left\{ 2, 1+2\frac{\normli{b}^2}{\alpha c_0} \right\} \right)^{-1} \normm{(x,y)}^2
	\leq \normm{(x,y)}_{\ho \times \d}^2
	\leq 2 \left( 1+\frac{\normli{b}^2}{\alpha c_0} \right) \normm{(x,y)}^2 ,
\end{equation}
where $\normm{(x,y)}_{\ho \times \d}^2 := \normltcb{x}^2 + \normltA{\na x}^2 + \normltAi{y}^2 + \normltci{\div y}^2$.
\end{theo}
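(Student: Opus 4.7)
The only difference between $\normm{(x,y)}^2$ and $\normm{(x,y)}_{\ho\times\d}^2$ is the last summand: $\normltci{b\cdot\na x-\div y}^2$ in the former versus $\normltci{\div y}^2$ in the latter. So the entire task is to compare these two quantities, up to an absorbable piece involving $\normltA{\na x}^2$, in both directions. The plan is therefore to apply the triangle inequality (in the form $(a+b)^2 \le 2a^2 + 2b^2$) twice, each time peeling off a $\normltci{b\cdot\na x}^2$ term, and then to control $\normltci{b\cdot\na x}^2$ by $\normltA{\na x}^2$ using the pointwise bounds $c \ge c_0 > 0$ (from Remark \ref{rem:isometry}(i)) and $\Amat \ge \alpha$ (from \eqref{eq:Amat}).

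First I would establish the auxiliary estimate
\begin{equation*}
\normltci{b\cdot\na x}^2 \leq \frac{\normli{b}^2}{c_0}\normlt{\na x}^2 \leq \frac{\normli{b}^2}{\alpha c_0}\normltA{\na x}^2,
\end{equation*}
which uses only $|b\cdot\na x|\leq\normli{b}|\na x|$ pointwise together with the two lower bounds mentioned above.

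Next, for the upper bound on $\normm{(x,y)}^2$ in \eqref{eq:normeq1}, I would apply $(a+b)^2 \le 2a^2+2b^2$ to write
\begin{equation*}
\normltci{b\cdot\na x-\div y}^2 \leq 2\normltci{b\cdot\na x}^2 + 2\normltci{\div y}^2 \leq 2\frac{\normli{b}^2}{\alpha c_0}\normltA{\na x}^2 + 2\normltci{\div y}^2,
\end{equation*}
add this to $\normltcb{x}^2 + \normltA{\na x}^2 + \normltAi{y}^2$, and factor out the largest coefficient, which is $\max\{2, 1+2\normli{b}^2/(\alpha c_0)\}$; this yields the left-hand inequality in \eqref{eq:normeq1}. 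For the other direction I would write $-\div y = (b\cdot\na x - \div y) - b\cdot\na x$ and use the same splitting to obtain
\begin{equation*}
\normltci{\div y}^2 \leq 2\normltci{b\cdot\na x-\div y}^2 + 2\frac{\normli{b}^2}{\alpha c_0}\normltA{\na x}^2,
\end{equation*}
plug this into $\normm{(x,y)}_{\ho\times\d}^2$, and factor out the common constant $2(1+\normli{b}^2/(\alpha c_0))$ to reach the right-hand inequality in \eqref{eq:normeq1}.

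There is no real obstacle; the argument is a pair of Young/triangle estimates plus a pointwise bound on $b\cdot\na x$. The only thing to watch is making sure the assumption $c \ge c_0 > 0$ is actually available (it is, from Remark \ref{rem:isometry}(i)) so that the weight $c^{-1}$ can be bounded by $c_0^{-1}$; without this, the term $\normltci{b\cdot\na x}^2$ could not be controlled. The constants produced by this bookkeeping are exactly those stated in \eqref{eq:normeq1}.
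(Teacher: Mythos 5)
Your proof is correct. The first half --- the auxiliary estimate $\normltci{b\cdot\na x}^2 \leq \frac{1}{c_0}\normlt{b\cdot\na x}^2 \leq \frac{\normli{b}^2}{\alpha c_0}\normltA{\na x}^2$, followed by $\normltci{b\cdot\na x-\div y}^2 \leq 2\normltci{b\cdot\na x}^2 + 2\normltci{\div y}^2$ and factoring out $\max\left\{2,\,1+2\frac{\normli{b}^2}{\alpha c_0}\right\}$ --- is exactly the paper's argument for the left-hand inequality. For the right-hand inequality your mechanics differ from the paper's, even though the underlying splitting is the same: the paper bounds $\normm{(x,y)}^2$ from \emph{below}, using $\normltci{b\cdot\na x-\div y}^2 \ge \frac{1}{2}\normltci{\div y}^2 - \normltci{b\cdot\na x}^2$ (which is your decomposition of $\div y$, rearranged and halved), then absorbs the resulting negative term via $\normltA{\na x}^2 \leq \normm{(x,y)}^2$, moves $-\frac{\normli{b}^2}{\alpha c_0}\normm{(x,y)}^2$ to the left, and finally relaxes $\frac{1}{2}\normltci{\div y}^2$ to get the factor $2$. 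You instead bound $\normm{(x,y)}_{\ho\times\d}^2$ from \emph{above} directly, substituting $\normltci{\div y}^2 \leq 2\normltci{b\cdot\na x-\div y}^2 + 2\frac{\normli{b}^2}{\alpha c_0}\normltA{\na x}^2$ and comparing coefficients. Your route avoids the self-referential absorption step, and with $\beta := \frac{\normli{b}^2}{\alpha c_0}$ it actually yields the sharper constant $\max\{2,\,1+2\beta\}$ on this side as well (the coefficients produced are $1$, $1+2\beta$, $1$, $2$), symmetric with the other direction; since $\max\{2,\,1+2\beta\} \leq 2(1+\beta)$, the stated bound follows, so your ``factor out $2(1+\beta)$'' step is valid. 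In short: identical key lemma and Young-type estimates, but your direct comparison is slightly more elementary than the paper's absorb-and-rearrange argument and marginally improves the constant before weakening it to the one in \eqref{eq:normeq1}.
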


\begin{proof}
First we estimate
\begin{equation} \label{eq:normeq1_1}
	\normltci{b\cdot\na x}^2
	\leq \frac{1}{c_0} \normlt{b\cdot\na x}^2
	\leq \frac{\normli{b}^2}{c_0} \normlt{\na x}^2
	\leq \frac{\normli{b}^2}{\alpha c_0} \normltA{\na x}^2 ,
\end{equation}
where $\normli{b}^2 := \sum_j \normli{b_j}^2$.
With \eqref{eq:normeq1_1} we have the lower bound in \eqref{eq:normeq1}:
\begin{align*}
	\normm{(x,y)}^2
	& = \normltcb{x}^2 + \normltA{\na x}^2 + \normltAi{y}^2 + \normltci{b\cdot\na x - \div y}^2 \\
	& \leq \normltcb{x}^2 + \normltA{\na x}^2 + \normltAi{y}^2 + 2\normltci{b\cdot\na x}^2 + 2\normltci{\div y}^2 \\
	& \leq \normltcb{x}^2 + \normltA{\na x}^2 + \normltAi{y}^2 + 2\frac{\normli{b}^2}{\alpha c_0} \normltA{\na x}^2 + 2\normltci{\div y}^2 \\
	& \leq \max \left\{ 2, 1+2\frac{\normli{b}^2}{\alpha c_0} \right\} \left( \normltcb{x}^2 + \normltA{\na x}^2 + \normltAi{y}^2 + \normltci{\div y}^2 \right) .	
\end{align*}
With \eqref{eq:normeq1_1} we also have
\begin{align*}
	\normm{(x,y)}^2
	& = \normltcb{x}^2 + \normltA{\na x}^2 + \normltAi{y}^2 + \normltci{b\cdot\na x - \div y}^2 \\
	& \ge \normltcb{x}^2 + \normltA{\na x}^2 + \normltAi{y}^2 - \normltci{b\cdot\na x}^2 + \frac{1}{2} \normltci{\div y}^2 \\
	& \ge \normltcb{x}^2 + \normltA{\na x}^2 + \normltAi{y}^2 - \frac{\normli{b}^2}{\alpha c_0} \normltA{\na x}^2 + \frac{1}{2} \normltci{\div y}^2 \\
	& \ge \normltcb{x}^2 + \normltA{\na x}^2 + \normltAi{y}^2 - \frac{\normli{b}^2}{\alpha c_0} \normm{(x,y)}^2 + \frac{1}{2} \normltci{\div y}^2 .
\end{align*}
By re-arranging the terms we obtain
\begin{align*}
	\left( 1+\frac{\normli{b}^2}{\alpha c_0} \right) \normm{(x,y)}^2
	& \ge \normltcb{x}^2 + \normltA{\na x}^2 + \normltAi{y}^2 + \frac{1}{2} \normltci{\div y}^2 \\
	& \ge \frac{1}{2} \left( \normltcb{x}^2 + \normltA{\na x}^2 + \normltAi{y}^2 + \normltci{\div y}^2 \right) ,
\end{align*}
and we have the upper bound in \eqref{eq:normeq1}.
\end{proof}

\begin{theo} \label{thm:normeq2}
Let the assumptions on the material coefficients of Remark \ref{rem:isometry}(ii) hold. We have
\begin{equation} \label{eq:normeq2}
	\left( \max \left\{ 2, 1+2\frac{\normli{b}^2}{\alpha c_0} \right\} \right)^{-1} \normmm{(x,y)}^2
	\leq \normmm{(x,y)}_{\ho \times \d}^2
	\leq 2 \left( 1+\frac{\normli{b}^2}{\alpha c_0} \right) \normmm{(x,y)}^2 ,
\end{equation}
where $\normmm{(x,y)}_{\ho \times \d}^2 := \normltc{x}^2 + \normltA{\na x}^2 + \normltAi{y}^2 + \normltcbi{\div y}^2$.
\end{theo}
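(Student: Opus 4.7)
The plan is to mirror the proof of Theorem \ref{thm:normeq1} essentially verbatim. The only structural difference between $\normmm{\cdot}$ and $\normmm{\cdot}_{\ho\times\d}$ is that the summand $\normltAi{y+bx}^2$ in $\normmm{(x,y)}^2$ is replaced by $\normltAi{y}^2$ in the weighted graph norm, while the other three summands ($\normltc{x}^2$, $\normltA{\na x}^2$, $\normltcbi{\div y}^2$) are common to both. So the whole proof reduces to controlling the cross-term $bx$ in the weighted $\Amat^{-1}$-norm by a quantity already present in both norms.

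First I would establish the pointwise-style inequality
\begin{equation*}
\normltAi{bx}^2 \leq \tfrac{1}{\alpha}\normlt{bx}^2 \leq \tfrac{\normli{b}^2}{\alpha}\normlt{x}^2 \leq \tfrac{\normli{b}^2}{\alpha c_0}\normltc{x}^2,
\end{equation*}
which plays here exactly the role that \eqref{eq:normeq1_1} played in the previous theorem (this is where the assumption $c\ge c_0>0$ enters, consistent with the form of the constants appearing in \eqref{eq:normeq2}).

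Next, for the lower bound in \eqref{eq:normeq2} I would split the mixed square by the triangle/Young inequality in the direction $\normltAi{y+bx}^2 \leq 2\normltAi{y}^2 + 2\normltAi{bx}^2$, substitute the pointwise bound above, and then collect the coefficient of $\normltc{x}^2$:
\begin{equation*}
\normmm{(x,y)}^2 \leq \bigl(1+2\tfrac{\normli{b}^2}{\alpha c_0}\bigr)\normltc{x}^2 + \normltA{\na x}^2 + 2\normltAi{y}^2 + \normltcbi{\div y}^2,
\end{equation*}
and the max-constant in \eqref{eq:normeq2} drops out. For the upper bound I would use the reverse splitting $\normltAi{y}^2 \leq 2\normltAi{y+bx}^2 + 2\normltAi{bx}^2$, again insert the pointwise bound, and factor out $2(1+\normli{b}^2/(\alpha c_0))$.

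No real obstacle is expected: the proof is purely algebraic and of the same shape as Theorem \ref{thm:normeq1}. The only minor thing to track is the direction of each triangle/Young expansion, and to verify that after collecting all three common summands together with the expanded mixed one, the prefactors on the right-hand side are bounded exactly by $\max\{2,1+2\normli{b}^2/(\alpha c_0)\}$ and $2(1+\normli{b}^2/(\alpha c_0))$ as stated.
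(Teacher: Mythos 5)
Your proposal is correct and takes essentially the same route as the paper's proof: the identical key estimate $\normltAi{b\,x}^2 \leq \frac{1}{\alpha}\normlt{b\,x}^2 \leq \frac{\normli{b}^2}{\alpha}\normlt{x}^2 \leq \frac{\normli{b}^2}{\alpha c_0}\normltc{x}^2$ (the analogue of \eqref{eq:normeq1_1}), followed by the same Young-type splittings of the mixed term, with your lower-bound computation matching the paper's verbatim. The only cosmetic difference is in the upper bound, where the paper rearranges $\normltAi{y+b\,x}^2 \geq \frac{1}{2}\normltAi{y}^2 - \normltAi{b\,x}^2$ and absorbs the negative term via $\normltc{x}^2 \leq \normmm{(x,y)}^2$ before moving it to the left, whereas you expand $\normltAi{y}^2 \leq 2\normltAi{y+b\,x}^2 + 2\normltAi{b\,x}^2$ directly in the graph norm; both variants use the same inequality and deliver the stated constant $2\left(1+\frac{\normli{b}^2}{\alpha c_0}\right)$.
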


\begin{proof}
First we estimate
\begin{equation} \label{eq:normeq2_1}
	\normltAi{b\,x}^2
	\leq \frac{1}{\alpha} \normlt{b\,x}^2
	\leq \frac{\normli{b}^2}{\alpha} \normlt{x}^2
	\leq \frac{\normli{b}^2}{\alpha c_0} \normltc{x}^2 .
\end{equation}
With \eqref{eq:normeq2_1} we have the lower bound in \eqref{eq:normeq2}:
\begin{align*}
	\normmm{(x,y)}^2
	& = \normltc{x}^2 + \normltA{\na x}^2 + \normltAi{y+b\,x}^2 + \normltcbi{\div y}^2 \\
	& \leq \normltc{x}^2 + \normltA{\na x}^2 + 2\normltAi{y}^2 + 2\normltAi{b\,x}^2 + \normltcbi{\div y}^2 \\
	& \leq \normltc{x}^2 + \normltA{\na x}^2 + 2\normltAi{y}^2 +  2 \frac{\normli{b}^2}{\alpha c_0} \normltc{x}^2 + \normltcbi{\div y}^2 \\
	& \leq \max\left\{ 2, 1+2 \frac{\normli{b}^2}{\alpha c_0} \right\} \left( \normltc{x}^2 + \normltA{\na x}^2 + \normltAi{y}^2 + \normltcbi{\div y}^2 \right) .
\end{align*}
With \eqref{eq:normeq2_1} we also have
\begin{align*}
	\normmm{(x,y)}^2
	& = \normltc{x}^2 + \normltA{\na x}^2 + \normltAi{y+b\,x}^2 + \normltcbi{\div y}^2 \\
	& \ge \normltc{x}^2 + \normltA{\na x}^2 + \frac{1}{2} \normltAi{y}^2 - \normltAi{b\,x}^2 + \normltcbi{\div y}^2 \\
	& \ge \normltc{x}^2 + \normltA{\na x}^2 + \frac{1}{2} \normltAi{y}^2 - \frac{\normli{b}^2}{\alpha c_0} \normltc{x}^2 + \normltcbi{\div y}^2 \\
	& \ge \normltc{x}^2 + \normltA{\na x}^2 + \frac{1}{2} \normltAi{y}^2 - \frac{\normli{b}^2}{\alpha c_0} \normmm{(x,y)}^2 + \normltcbi{\div y}^2 .
\end{align*}
By re-arranging the terms we obtain
\begin{align*}
	\left( 1+ \frac{\normli{b}^2}{\alpha c_0} \right)\normmm{(x,y)}^2
	& \ge \normltc{x}^2 + \normltA{\na x}^2 + \frac{1}{2} \normltAi{y}^2 + \normltcbi{\div y}^2 \\
	& \ge \frac{1}{2} \left( \normltc{x}^2 + \normltA{\na x}^2 + \normltAi{y}^2 + \normltcbi{\div y}^2 \right) ,
\end{align*}
and we have the upper bound in \eqref{eq:normeq2}.
\end{proof}

For completeness, we also remark the following.

\begin{rem} \label{rem:bounded}
In the following cases we need to assume that the domain is bounded:
\begin{itemize}
\item[\bf(i)] If $c-\div b=0$, the norm $\normm{\cdot}$ becomes
\begin{equation*}
	\normm{(x,y)}^2 = \normltA{\na x}^2 + \normltAi{y}^2 + \normltci{b\cdot\na x - \div y}^2 ,
\end{equation*}
and for proper error control, we need to assume that $\Omega$ is bounded so that the Friedrichs inequality \eqref{eq:Cf} holds: then $\normltA{\na x}$ is equivalent to the weighed $\ho$-norm $\sqrt{\normlt{x}^2 + \normltA{\na x}^2}$. Note that the norm equivalence of Theorem \ref{thm:normeq1} holds also in this case.
\item[\bf(ii)] If $c=0$, the norm $\normmm{\cdot}$ becomes
\begin{equation*}
	\normmm{(x,y)}^2 = \normltA{\na x}^2 + \normltAi{y+b\,x}^2 + \normltbi{\div y}^2 ,
\end{equation*}
and again, for proper error control, we need to assume that $\Omega$ is bounded so that the Friedrichs inequality \eqref{eq:Cf} holds. Note that the norm equivalence of Theorem \ref{thm:normeq2} no longer holds. However, with \eqref{eq:Cf} we can estimate
\begin{equation*}
	\normltAi{b\,x}^2
	\leq \frac{1}{\alpha} \normlt{b\,x}^2
	\leq \frac{\normli{b}^2}{\alpha} \normlt{x}^2
	\leq \frac{\normli{b}^2 \Cf^2}{\alpha} \normlt{\na x}^2
	\leq \frac{\normli{b}^2 \Cf^2}{\alpha^2} \normltA{\na x}^2 ,
\end{equation*}
and by proceeding as in the proof of Theorem \ref{thm:normeq2} we obtain a similar norm equivalence.
\end{itemize}
\end{rem}

\end{document}